\newtheorem{proposition}{Proposition}
\theoremstyle{definition}
\newtheorem{definition}{Definition}
\mathchardef\pFcomma=\mathcode`, 
\newcommand*\pFq[5]{%
  \begingroup
  \begingroup\lccode`~=`,
    \lowercase{\endgroup\def~}{\pFcomma\mkern\pFqskip}%
  \mathcode`,=\string"8000
  {}_{#1}F_{#2}\biggl[\genfrac..{0pt}{}{#3}{#4};#5\biggr]%
  \endgroup
}
\newcommand*\pGq[5]{%
  \begingroup
  \begingroup\lccode`~=`,
    \lowercase{\endgroup\def~}{\pFcomma\mkern\pFqskip}%
  \mathcode`,=\string"8000
  {}_{#1}\phi_{#2}\biggl[\genfrac..{0pt}{}{#3}{#4}\bigg|#5\biggr]%
  \endgroup
}
\numberwithin{equation}{section}
\title{Bivariate Bannai-Ito polynomials}
\author[1]{Jean-Michel Lemay}
\affil[1]{Centre de Recherches Math\'ematiques, Universit\'e de Montr\'eal, C.P. 6128,\protect\\ Succ. Centre-ville, Montr\'eal, QC, Canada, H3C 3J7 \authorcr E-mail address: jean-michel.lemay.1@umontreal.ca, vinet@crm.umontreal.ca}
\author[1]{Luc Vinet}
\date{}
\begin{document}
\maketitle

\begin{abstract}

\noindent A two-variable extension of the Bannai-Ito polynomials is presented. They are obtained via $q\to-1$ limits of the bivariate $q$-Racah and Askey-Wilson orthogonal polynomials introduced by Gasper and Rahman. Their orthogonality relation is obtained. These new polynomials are also shown to be multispectral. Two Dunkl shift operators are seen to be diagonalized by the bivariate Bannai-Ito polynomials and 3- and 9-term recurrence relations are provided.

\end{abstract}

\section*{Introduction}

In their classification of P- and Q-polynomial association schemes \cite{BannaiIto}, Bannai and Ito identified a new 4-parameter family of orthogonal polynomials that now bear their names. They provided the explicit expressions of these polynomials and observed that they correspond to a $q\to-1$ limit of the $q$-Racah polynomials. The understanding of the Bannai-Ito polynomials has considerably increased in recent years. Of particular relevance to the present study is the fact that they have been shown \cite{DunklShift} to also arise as $q\to-1$ limits of the Askey-Wilson polynomials. The Bannai-Ito polynomials are now known to be multispectral : they are eigenfunctions of the most general first order shift operator of Dunkl type that preserves the space of polynomials of a given degree \cite{DunklShift}. They have been identified with the non-symmetric Wilson polynomials \cite{nonsym} and are essentially the Racah coefficients of the Lie superalgebra $\mathfrak{osp}(1|2)$ \cite{RacahCoef}. They have moreover found various applications beyond algebraic combinatorics especially in the context of superintegrable and exactly solvable models \cite{laplacedunkl,diracdunkl,BIalgebra_2sphere}. The Bannai-Ito polynomials and their kernel partners, the complementary Bannai-Ito polynomials admit various multispectral families of orthogonal polynomials as descendants and special cases and thus sit at the top of a $q=-1$ analog of the Askey-scheme \cite{family,bigjac,bochner,dual,DunklShift,CBI,ChiharaP}.

The extension to many variables of the theory of univariate orthogonal polynomials is obviously of great interest. There are two major directions in this broad topic (see \cite{DunklXu, Marcellan} for instance). One involves the theory of symmetric functions \cite{Mac} and has the Macdonald and Koornwinder polynomials associated to root systems as main characters. The other works through the coupling of univariate polynomials and features the multivariable extension of the Racah and Wilson polynomials and their descendants introduced by Tratnik \cite{Tratnik1,Tratnik2}. We shall focus on the latter area in the following. A key feature of Tratnik's construction is that the multivariate orthogonality relation is obtained by induction on the univariate one. Iliev and Geronimo have shown that these Tratnik polynomials are multispectral \cite{GeroIliev,Iliev}. Their $q$-generalizations have been discovered by Gasper and Rahman who thus provided multivariable extension of the $q$-Racah and Askey-Wilson polynomials and in so doing of the entire $q$-scheme \cite{GR1,GR2}.

The goal of the present paper is to initiate a multivariable extension for the $q=-1$ scheme. Specifically, we introduce a bivariate extension of the Bannai-Ito polynomials and provide various structure relations. These polynomials are defined in formula \eqref{bivBI}.

The paper will be comprised of three main sections. We begin with a review of the Bannai-Ito polynomials and their structure relations. In section 2, we define the bivariate Bannai-Ito polynomials from a $q\to-1$ limit of Gasper and Rahman's two-variables $q$-Racah polynomials. The truncation conditions are examined and the orthogonality relation is obtained. In section 3, we obtain an untruncated definition for the bivariate Bannai-Ito polynomials from a $q\to-1$ limit of two-variable Askey-Wilson polynomials. This has the benefit of expressing the multispectrality relations in terms of operators which act directly on the variables instead of on the points of the orthogonality grid. A connection between the two definitions is established and the multispectrality relations for the polynomials are derived. Remarks and open questions are discussed in the conclusion.

\section{Univariate Bannai-Ito polynomials}

The monic Bannai-Ito polynomials $B_n(x;\rho_1,\rho_2,r_1,r_2)$, or $B_n(x)$ for short, depend on 4 parameters $\rho_1,\rho_2,r_1,r_2$ and are symmetric with respect to the $\mathbb{Z}_2 \times \mathbb{Z}_2$ group of transformations generated by $\rho_1 \leftrightarrow \rho_2$ and $r_1 \leftrightarrow r_2$. Explicitly, this means that the BI polynomials verify
\begin{align}
 B_n(x;\rho_1,\rho_2,r_1,r_2)=B_n(x;\rho_2,\rho_1,r_1,r_2)=B_n(x;\rho_1,\rho_2,r_2,r_1)=B_n(x;\rho_2,\rho_1,r_2,r_1).    
\end{align}
We denote by $g$ the combination of parameters 
\begin{align}
 g=\rho_1+\rho_2-r_1-r_2.
\end{align}
Throughout this section, it will be convenient to write integers as follows
\begin{align} \label{paritynotation}
n=2n_e+n_p, \qquad n_p\in\{0,1\}, \quad n\in\mathbb{N}.           
\end{align}
With these notations, the Bannai-Ito polynomials can be expressed in terms of two generalized hypergeometric series 
\begin{align} \label{hypergeo-expr}
\frac{1}{\eta_n}B_n(x;\rho_1,\rho_2,r_1,r_2) =\ &\pFq{4}{3}{\scriptstyle -n_e, n_e+g+1, x-r_1+\frac12, -x-r_1+\frac12}{\scriptstyle 1-r_1-r_2, \rho_1-r_1+\frac12, \rho_2-r_1+\frac12}{1} \\[0.5em]
 &+ \frac{(-1)^n(n_e\!+\!n_p\!+\!g n_p)(x\!-\!r_1\!+\!\tfrac12)}{(\rho_1-r_1+\tfrac12)(\rho_2-r_1+\tfrac12)} \pFq{4}{3}{\scriptstyle -n_e-n_p+1, n_e+n_p+g+1, x-r_1+\frac32, -x-r_1+\frac12}{\scriptstyle 1-r_1-r_2, \rho_1-r_1+\frac32, \rho_2-r_1+\frac32}{1} \notag
\end{align}
where the normalization coefficient is given by 
\begin{align} \label{eta}
 \eta_n = (-1)^{n} \frac{(\rho_1-r_1+\tfrac12)_{n_e+n_p}(\rho_2-r_1+\tfrac12)_{n_e+n_p}(1-r_1-r_2)_{n_e}}{(n_e+g+1)_{n_e+n_p}}.
\end{align}
The expression \eqref{hypergeo-expr} can be obtained from a $q\to-1$ limit of the $q$-Racah polynomials \cite{BannaiIto} and also from a $q\to-1$ limit of the Askey-Wilson polynomials \cite{DunklShift}. Note that the two hypergeometric functions appearing in \eqref{hypergeo-expr} are almost identical except for two $+1$ shifts in the upper parameter row and two in the lower row.   

The $B_n(x)$ satisfy the three-term recurrence relation
\begin{gather}
\label{Recurrence-Relation}
xB_n(x) = B_{n+1}(x)+(\rho_1-A_n-C_n)B_{n}(x)+A_{n-1}C_{n}B_{n-1}(x),
\end{gather}
with the initial conditions $B_{-1}(x)=0$ and $B_{0}(x)=1$.
The recurrence coef\/f\/icients $A_n$ and $C_n$ are given by
\begin{gather}\label{Coeff-An}
\begin{aligned}
A_n&=
\begin{cases}
\dfrac{(n+2\rho_1-2r_1+1)(n+2\rho_1-2r_2+1)}{4(n+g+1)}, & n~\text{even},
\vspace{1mm}\\
\dfrac{(n+2g+1)(n+2\rho_1+2\rho_2+1)}{4(n+g+1)}, & n~\text{odd},
\end{cases}
\\[1em] 
&C_n =
\begin{cases}
-\dfrac{n(n-2r_1-2r_2)}{4(n+g)}, & n~\text{even},
\vspace{1mm}\\
-\dfrac{(n+2\rho_2-2r_2)(n+2\rho_2-2r_1)}{4(n+g)}, & n~\text{odd}.
\end{cases}
\end{aligned}
\end{gather}
It can be seen from the above relations that the positivity conditions $u_n = A_{n-1}C_n >0$ cannot be satisfied for all $n\in\mathbb{N}$. This comes from the fact that $C_n$ becomes negative for large $n$. It follows that the Bannai-Ito polynomials can only form a finite set of orthogonal polynomials for which the conditions $u_n>0$, $n=1,2,\dots,N$ are verified. This requires that the parameters realize a truncation condition for which 
\begin{align} \label{truncationcondition}
u_0=u_{N+1}=0.
\end{align}
We call the integer $N$ the truncation parameter.

If these conditions are fulf\/illed, the BI polynomials $B_{n}(x)$ satisfy the discrete
orthogonality relation
\begin{gather}
\sum_{k=0}^{N}w_{k}B_{n}(x_k)B_{m}(x_k)=h_n\delta_{nm},
\end{gather}
with respect to a positive set of weights~$w_k$.
The orthogonality grid ~$x_k$ corresponds to the simple roots of the polynomial $B_{N+1}(x)$.
The explicit formulas for the weight function $w_{k}$ and the grid points~$x_k$ depend on the
parity of $N$ and more explicitly on the realization of the truncation condition~$u_{N+1}=0$.

If $N$ is even, it follows from~\eqref{Coeff-An} that the condition $u_{N+1}=0$ is tantamount to one of the following requirements associated to all possible values of $j$ and $\ell$ :
\begin{align} \label{Tcond1}
i)~ r_j - \rho_\ell = \frac{N+1}{2}, \qquad j,\ell \in\{1,2\}.
\end{align}
Note that the four possibilities coming from the choices of $j$ and $\ell$ are equivalent since the polynomials $B_n(x)$ are invariant under the exchanges $\rho_1\leftrightarrow\rho_2$ and $r_1\leftrightarrow r_2$.
To make the formulas explicit, fix $j=\ell=1$. Then the grid points have the expression
\begin{gather}
\label{gridi}
x_k=(-1)^{k}(k/2+\rho_1+1/4)-1/4,
\end{gather}
 for $k=0,\dots,N$ and using \eqref{paritynotation} the weights take the form
\begin{gather}
\label{Ortho-Weight}
w_{k}=\frac{(-1)^{k}}{k_e!}
\frac{(\rho_1-r_1+1/2)_{k_e+k_p}(\rho_1-r_2+1/2)_{k_e+k_p}(\rho_1+\rho_2+1)_{k_e}(2\rho_1+1)_{k_e}}
{(\rho_1+r_1+1/2)_{k_e+k_p}(\rho_1+r_2+1/2)_{k_e+k_p}(\rho_1-\rho_2+1)_{k_e}},
\end{gather}
where $(a)_{n}=a(a+1)\cdots(a+n-1)$ denotes the Pochhammer symbol. The normalization factors are 
\begin{align} \label{Ortho-Norm}
 h_{n} = \frac{n_e! N_e! (1+2\rho_1)_{N_e} (1+\rho_1+\rho_2)_{n_e} (1+n_e+g)_{N_e-n_e} (\frac12+\rho_1-r_2)_{n_e+n_p} (\frac12+\rho_2-r_2)_{n_e+n_p} }{ (N_e-n_e-n_p)! (\frac12+\rho_1+r_2)_{N_e-n_e} (\frac12+n_e+n_p+\rho_2-r_1)_{N_e-n_e-n_p} (1+n+g)_{n_e+n_p}^2 }.
\end{align}
The formulas for other values of $j$ and $\ell$ can be obtained by using the appropriate substitutions $\rho_1\leftrightarrow\rho_2$ and $r_1\leftrightarrow r_2$ in \eqref{Tcond1}--\eqref{Ortho-Norm}.

If $N$ is odd, it follows from~\eqref{Coeff-An} that the condition $u_{N+1}=0$ is equivalent to
one of the following restrictions:
\begin{gather} \label{Tcond2}
ii)~\rho_1+\rho_2=-\frac{N+1}{2},
\qquad
iii)~r_1+r_2=\frac{N+1}{2},
\qquad
iv)~\rho_1+\rho_2-r_1-r_2=-\frac{N+1}{2}.
\end{gather}
We refer to the possible truncation conditions as type $i)$ to type $iv)$. Note however that type $iv)$ leads to a~singularity in $u_n$ when $n=(N+1)/2$ and is therefore not admissible\footnote{It might be possible to absorb this singularity with some fine-tuning of the parameters as has been done for the Racah and $q$-Racah polynomials \cite{PR1,PR2} but this has not been explored yet and goes beyond the scope of this paper.}.
For type~$ii)$, the formulas~\eqref{gridi} and \eqref{Ortho-Weight} hold and the normalization factors are given by
\begin{align} \label{Ortho-Norm2}
 h_{n} = \frac{ n_e! N_e! (1+2\rho_1)_{N_e+1} (1\!-\!r_1\!-\!r_2)_{n_e} (1\!+\!n_e\!+\!g)_{N_e+1-n_e} (\frac12\!+\!\rho_1\!-\!r_1)_{n_e+n_p} (\frac12\!+\!\rho_1\!-\!r_2)_{n_e+n_p} }{ (N_e\!-\!n_e)! (\frac12+\rho_1+r_1)_{N_e+1-n_e-n_p} (\frac12+n_e+n_p+\rho_2-r_2)_{N_e+1-n_e-n_p} (1+n+g)_{n_e+n_p}^2 }.
\end{align}
For type $iii)$, the spectral points are
\begin{gather} \label{gridiii}
x_{k}=(-1)^{k}(r_1-k/2-1/4)-1/4,
\end{gather}
for $k=0,\dots,N$, the weight function is given by~\eqref{Ortho-Weight} with the substitutions
$(\rho_1,\rho_2,r_1,r_2)\rightarrow-(r_1,r_2,\rho_1,\rho_2)$ and the normalization factors read
{\small
\begin{align} \label{Ortho-Norm3}
 h_{n} = \frac{ n_e! N_e! (2r_2\!-\!N)_{N_e+1} (\rho_1\!+\!\rho_2\!-\!N_e)_{N_e+1+n_e} (\rho_1\!+\!\rho_2\!-\!N_e)_{n_e} (r_2\!+\!\rho_1\!-\!\tfrac12\!-\!N_e)_{n_e+n_p} (r_2\!+\!\rho_2\!-\!\tfrac12\!-\!N_e)_{n_e+n_p} }{ (N_e-n_e)! (\rho_1+\rho_2-N_e)_{n}^2 (r_2-\rho_1-\tfrac12-N_e)_{N_e+1-n_e-n_p} (r_2-\rho_2+\tfrac12+n_e+n_p)_{N_e+1-n_e-n_p} }.
\end{align}
}
To construct a bivariate extension of the Bannai-Ito polynomials, the different truncation conditions for different parities of $N$ will play an important role. 

The BI polynomials also verify a difference equation :
\begin{align} \label{DiffEqn1}
 \mathcal{L} B_n(x) = \lambda_n B_n(x)
\end{align}
with
\begin{align} \label{DiffEqn2}
 \mathcal{L} = \frac{(x-\rho_1)(x-\rho_2)}{2x} (1-R_x) + \frac{(x-r_1+\tfrac12)(x-r_2+\tfrac12)}{2x+1}(T_x^{1}R_x-1)
\end{align}
where $1$ is the identity operator, $R_x f(x)=f(-x)$ denotes the reflection operator and $T_{x}^{m}f(x)=f(x+m)$ is a shift operator. The eigenvalues are given by
\begin{align} \label{DiffEqn3}
 \lambda_n = \begin{cases}
              \frac{n}{2} \quad &\text{$n$ even,} \\
	      r_1+r_2-\rho_1-\rho_2-\frac{n+1}{2} \quad &\text{$n$ odd}.
             \end{cases}
\end{align}
It was shown in \cite{DunklShift} that $\mathcal{L}$ is in fact the most general first order Dunkl difference operator with orthogonal polynomials as eigenfunctions.

\section{Limit from the bivariate q-Racah polynomials}

\subsection{Defining the bivariate Bannai-Ito polynomials}

\begin{definition}
The bivariate Bannai-Ito polynomials are defined by
\begin{align} \label{bivBI}
 B_{n_1,n_2}(z_1,z_2) = B_{n_1} \left(z_1-\tfrac14 ; \rho_1^{(1)}, \rho_2^{(1)}, r_1^{(1)}, r_2^{(1)} \right)B_{n_2}\left((-1)^{n_1}z_2-\tfrac14; \rho_1^{(2)}, \rho_2^{(2)}, r_1^{(2)}, r_2^{(2)}  \right)
\end{align}
where the $B_n$ and $\eta_n$ are as in \eqref{hypergeo-expr} and \eqref{eta} and the parameters are given by 
\begin{align} \label{paramB1}
\begin{aligned}
     \rho_1^{(1)} &= c-p_1+\tfrac12, \qquad &&r_1^{(1)} = \tfrac12 -p_1,  \\
     \rho_2^{(1)} &= z_2+p_2-\tfrac14,       \qquad &&r_2^{(1)} = z_2-p_2+\tfrac14  
\end{aligned}
\end{align}
and
\begin{align} \label{paramB2}
\begin{aligned}
     \rho_1^{(2)} &= \tfrac{n_1+1}{2}+c+p_2-p_1, \qquad &&r_1^{(2)} = \tfrac{1-n_1}{2}-p_1-p_2, \\
     \rho_2^{(2)} &= p_3 - (-1)^{n_1+N}(\tfrac{N}{2}+p_1+p_2+p_3), \qquad &&r_2^{(2)}= -p_3 - (-1)^{n_1+N}(\tfrac{N}{2}+p_1+p_2+p_3).
\end{aligned}
\end{align}
\end{definition}

It is useful to denote the first and second BI polynomial in the definition by $B_{n_1}^{(1)}(z_1)$ and $B_{n_2}^{(2)}(z_2)$.
Note that $B_{n_1}^{(1)}(z_1)$ contains the variable $z_2$, while $B_{n_2}^{(2)}(z_2)$ contains the degree $n_1$ in their respective parameters. We shall see that the $B_{n_1,n_2}(z_1,z_2)$ are orthogonal polynomials of degree $n_1+n_2 \le N$ in the variables $z_1$ and $z_2$ which depend on four parameters $p_1, p_2, p_3$ and $c$. 

Let us motivate this definition. In a spirit similar to the one that led to the discovery of the Bannai-Ito polynomials, we look at a $q\to-1$ limit of $q$-Racah polynomials in two-variable introduced by Gasper and Rahman in \cite{GR1} as a $q$-generalization of Tratnik's multivariable Racah polynomials. We start here by specializing their $q$-Racah polynomials to two variables. Consider the product of $q$-Racah polynomials $R^{(1)}_{n_1} \times R^{(2)}_{n_2}$ depending on four parameters $a_1, a_2, a_3$ and $b$ where $R^{(1)}_{n_1}$ and $R^{(2)}_{n_2}$ are defined by
\begin{align} \label{q-RacahOPs}
\begin{aligned}
 R^{(1)}_{n_1} &= \pGq{4}{3}{q^{-n_1}, ba_2q^{n_1}, q^{-x_1}, a_1 q^{x_1}}{bq, a_1a_2q^{x_2}, q^{-x_2}}{q;q} , \\[1em]
 R^{(2)}_{n_2} &= \pGq{4}{3}{q^{-n_2}, ba_2a_3q^{2n_1+n_2}, q^{n_1-x_2}, a_1a_2q^{n_1+x_2}}{ba_2q^{2n_1+1}, a_1a_2a_3q^{N+n_1}, q^{n_1-N}}{q;q} 
\end{aligned}
\end{align}
in terms of the usual basic hypergeometric function $_r \phi_s$ (see e.g. \cite{koekoek}). Up to normalization of the polynomials, those corresponds to the $q$-Racah polynomials introduced by Gasper and Rahman \cite{GR1}. Our goal is to take a $q\to-1$ limit of these polynomials in such a way that each generalized $q$-hypergeometric function reduces to a Bannai-Ito polynomials and that each parameter survives the limit. Let us first write the hypergeometric functions as series : 
\begin{align} \label{Rseries}
 R^{(1)}_{n_1} = \sum_{k=0}^{\infty} A^{(1)}_{k} q^k, \qquad \quad
 R^{(2)}_{n_2} = \sum_{k=0}^{\infty} A^{(2)}_{k} q^k
\end{align}
where the coefficients are given by
\begin{align} \label{A12coefs}
\begin{aligned}
 A^{(1)}_{k} = &\prod_{i=0}^{k-1} \frac{(1-q^{-n_1+i})(1-ba_2q^{n_1+i})(1-q^{-x_1+i})(1-a_1q^{x_1+i})}{(1-q^{1+i})(1-bq^{1+i})(1-a_1a_2q^{x_2+i})(1-q^{-x_2+i})} , \\
 A^{(2)}_{k} = &\prod_{i=0}^{k-1} \frac{(1-q^{-n_2+i})(1-ba_2a_3q^{2n_1+n_2+i})(1-q^{n_1-x_2+i})(1-a_1a_2q^{n_1+x_2+i})}{(1-q^{1+i})(1-ba_2q^{2n_1+1+i})(1-a_1a_2a_3q^{N+n_1+i})(1-q^{n_1-N+i})} .
\end{aligned}
\end{align}
Note that the coefficients of an hypergeometric series are usually written in terms of Pochhammer symbols, but for our purpose, it is essential to expand them as products because the parity of the dummy index $i$ will play an important role.
Now, achieve the $q\to-1$ limit with the following parametrization 
\begin{align} \label{parametrization}
\begin{aligned}
 &q\to -e^t, \qquad \quad t\to0, \qquad q^{x_1} \to (-1)^{\frac{s_1}{2}}e^{t y_1}, \quad q^{x_2} \to (-1)^{\frac{s_2}{2}}e^{t y_2}, \\
 &a_1\to (-1)^{\frac{s_3}{2}}e^{t\alpha_1}, \quad a_2\to (-1)^{\frac{s_4}{2}}e^{t\alpha_2}, \quad a_3\to (-1)^{\frac{s_5}{2}}e^{t\alpha_3}, \quad b\to (-1)^{\frac{s_6}{2}}e^{t\beta}
\end{aligned}
\end{align}
where the $s_i \in \{0,1\},\ i=1,2,\dots,6$ are integers to be determined. Let us now sketch how one chooses a proper set of $s_i$. First, insert the parametrization \eqref{parametrization} in the coefficients \eqref{A12coefs} to obtain
\begin{align} \label{dummycoef}
\begin{aligned}
 A^{(1)}_{k} = &\prod_{i=0}^{k-1} \frac{(1-(-1)^{b_1+i} e^{t B_1})(1-(-1)^{b_2+i} e^{t B_2})(1-(-1)^{b_3+i} e^{t B_3})(1-(-1)^{b_4+i} e^{t B_4})}{(1-(-1)^{b_5+i} e^{t B_5})(1-(-1)^{b_6+i} e^{t B_6})(1-(-1)^{b_7+i} e^{t B_7})(1-(-1)^{b_8+i} e^{t B_8})}, \\
 A^{(2)}_{k} = &\prod_{i=0}^{k-1} \frac{(1-(-1)^{b_9+i} e^{t B_9})(1-(-1)^{b_{10}+i} e^{t B_{10}})(1-(-1)^{b_{11}+i} e^{t B_{11}})(1-(-1)^{b_{12}+i} e^{t B_{12}})}{(1-(-1)^{b_{13}+i} e^{t B_{13}})(1-(-1)^{b_{14}+i} e^{t B_{14}})(1-(-1)^{b_{15}+i} e^{t B_{15}})(1-(-1)^{b_{16}+i} e^{t B_{16}})}
\end{aligned}
\end{align}
where the $b_j$ are linear combinations of $n_1, n_2, N$ and the $s_i$, while the $B_j$ are linear combinations of the dummy index $i$, the degrees $n_1, n_2, N$, the variables $y_1, y_2$ and the parameters $\alpha_1, \alpha_2, \alpha_3, \beta$. The choice of $s_i$ should be such that the $b_j$ are integers. Thus, depending on the parity of the $b_j$, each factor in the limit $t\to0$ will alternate between 0 and 2 for incrementing values of $i$. It is straightforward to see that the parities of the $b_j$ must be chosen in such a way that there are the same number of zeroes and twos in the numerator and in the denominator. Otherwise, the limit would diverge or become zero. 
Now, ratios of 2 will simply cancel out while ratios of 0 will give a non-trivial limit :
\begin{align}
 \frac{1+e^{tB_l}}{1+e^{tB_m}} \xrightarrow[]{t\to0\ } 1, \qquad \quad \frac{1-e^{tB_l}}{1-e^{tB_m}} \xrightarrow[]{t\to0\ } \frac{B_l}{B_m}.
\end{align}
The key to obtaining Bannai-Ito polynomials in the limit $t\to0$ is to chose the $s_i$ in such a way that, for each $i$, there are always 2 zeroes and 2 twos in both numerators and denominators. Such a choice is not unique, but it can be verified by enumeration that all possible choices yield results that are equivalent under affine transformations of the parameters. This implies that \eqref{dummycoef} will reduce to 
\begin{align} \label{dummycoef2}
\begin{aligned}
 A^{(1)}_{k} \xrightarrow[]{t\to0\ } \prod_{i\ \text{even}} \frac{B_{j_1}B_{j_2}}{B_{j_5}B_{j_6}} \times \prod_{i\ \text{odd}} \frac{B_{j_3}B_{j_4}}{B_{j_7}B_{j_8}}, \\
 A^{(2)}_{k} \xrightarrow[]{t\to0\ } \prod_{i\ \text{even}} \frac{B_{j_9}B_{j_{10}}}{B_{j_{13}}B_{j_{14}}} \times \prod_{i\ \text{odd}} \frac{B_{j_{11}}B_{j_{12}}}{B_{j_{15}}B_{j_{16}}}
\end{aligned} 
\end{align}
for some permutation $\pi\in S_{16}$ of the integers $j_k=\pi(k)$, $k=1,\dots,16$\ depending on the choice of the $s_i$. 
The explicit computation of the limit requires to consider separately all possible parities of the degrees $n_1, n_2, N$ and also of the dummy indices $k$ and $i$. Some notable features arise :
First, the products in \eqref{dummycoef2} can be written in terms of Pochhammer symbols. However, the products over even values of $i$ will get additional factors when $k$ is odd.
Thus, the sums in \eqref{Rseries} must be split between even and odd values of $k$. Each sum can be expressed as an hypergeometric $_4F_3$, but the additional factors for $k$ odd have to be pulled in front. One obtains a linear combination of two similar $_4F_3$ with some $+1$ shifts. It is then possible to compare the result with \eqref{hypergeo-expr} to express the result in terms of Bannai-Ito polynomials. 

Finally, let us consider without loss of generality one possible parametrization for the limit $q\to-1$ :
\begin{align} \label{param}
\begin{aligned}
 &q\to -e^t, \qquad \quad t\to0, \qquad q^{x_1} \to e^{t y_1}, \quad q^{x_2} \to e^{t y_2}, \\
 &a_1\to -e^{t\alpha_1}, \quad a_2\to e^{t\alpha_2}, \quad a_3\to e^{t\alpha_3}, \quad b\to -e^{t\beta}.
\end{aligned}
\end{align}
For convenience, we also use a different set of parameters :
\begin{align} \label{reparam}
\begin{aligned}
 \alpha_1 = 4p_1-1, \quad \alpha_2 = 4p_2, \quad \alpha_3 = 4p_3+1, \quad \beta = 2c,\quad y_1 = \tfrac12-2z_1, \quad y_2 = \tfrac12-2z_2-2p_1-2p_2.
\end{aligned}
\end{align}

Using \eqref{param} and \eqref{reparam}, a straightforward computation yields 
\begin{align*}
 R^{(1)}_{n_1} \to \ \frac{1}{\eta_{n_1}} B_{n_1} \left(z_1-\tfrac14 ; \rho_1^{(1)}, \rho_2^{(1)}, r_1^{(1)}, r_2^{(1)} \right) \qquad
 R^{(2)}_{n_2} \to \ \frac{1}{\eta_{n_2}} B_{n_2}\left((-1)^{n_1}z_2-\tfrac14; \rho_1^{(2)}, \rho_2^{(2)}, r_1^{(2)}, r_2^{(2)}  \right)
\end{align*}
where the parameters are given by \eqref{paramB1} and \eqref{paramB2}. Omitting the normalization factors, this corresponds to Definition 1 of the bivariate Bannai-Ito polynomials given above. There are two reasons for removing the factors $\eta_{n_i}$ : It is more natural to define the bivariate polynomials as a product of two monic Bannai-Ito polynomials and more importantly, the normalization factor $\eta_{n_1}$ being a rational function in $z_2$ would break the polynomial structure.  

\subsection{Truncation conditions and orthogonality relation} 

Given our definition of the bivariate Bannai-Ito polynomials, the most important property to verify is orthogonality. We begin by stating the result. 

\begin{proposition}
The bivariate Bannai-Ito polynomials defined in \eqref{bivBI} satisfy the orthogonality relation
\begin{align} \label{ORbivBI}
 \sum_{s=0}^{N} \sum_{r=0}^{N} w^{(1)}_{r,N-s}w^{(2)}_{s,N} B_{n_1,n_2}(z_1(r),z_2(s))B_{m_1,m_2}(z_1(r),z_2(s)) = H_{n_1,n_2,N} \delta_{n_1,m_1}\delta_{n_2,m_2}
\end{align}
where the grids are given by
\begin{align} 
z_1(r) &= \tfrac12 \left[ (-1)^{r+s+N}(r+s-N-2p_1+\tfrac12) \right]  \qquad r=0,\dots, N  \label{gridz1}\\
z_2(s) &= \tfrac12 \left[ (-1)^{s+N}(s-N-2p_1-2p_2+\tfrac12) \right] \qquad s=0,\dots, N  \label{gridz2}
\end{align}
the weights by 
\begin{align} \label{w1}
\begin{aligned}
 w^{(1)}_{2r,2\tilde{s}} &= \frac{(2p_2)_{r} (-\tilde{s})_{r} (1-2p_1-2\tilde{s})_{r} (\frac32+c-2p_1-\tilde{s})_{r}}{r!(\frac12-c-\tilde{s})_{r} (1-2p_1-\tilde{s})_{r} (1-2p_1-2p_2-2\tilde{s})_{r}}  \\[0.5em]
 w^{(1)}_{2r+1,2\tilde{s}} &= -\frac{(2p_2)_{r+1} (-\tilde{s})_{r+1} (1-2p_1-2\tilde{s})_{r} (\frac32+c-2p_1-\tilde{s})_{r}}{r!(\frac12-c-\tilde{s})_{r} (1-2p_1-\tilde{s})_{r+1} (1-2p_1-2p_2-2\tilde{s})_{r+1}}  \\[0.5em]
 w^{(1)}_{2r,2\tilde{s}+1} &= \frac{(2p_2)_{r} (-\tilde{s})_{r} (-2p_1-2\tilde{s})_{r} (\frac12+c-2p_1-\tilde{s})_{r}}{r!(-\frac12-c-\tilde{s})_{r} (1-2p_1-\tilde{s})_{r} (-2p_1-2p_2-2\tilde{s})_{r}}  \\[0.5em]
 w^{(1)}_{2r+1,2\tilde{s}+1} &= -\frac{(2p_2)_{r+1} (-\tilde{s})_{r} (-2p_1-2\tilde{s})_{r} (\frac12+c-2p_1-\tilde{s})_{r+1}}{r!(-\frac12-c-\tilde{s})_{r+1} (1-2p_1-\tilde{s})_{r} (-2p_1-2p_2-2\tilde{s})_{r+1}}
\end{aligned}
\end{align}
and 
{\small
\begin{align} \label{w2}
\begin{aligned}
 w^{(2)}_{2s,2N}     &= \frac{ (-1)^s \binom{N}{s} (\frac32+c-2p_1-N)_{s} (1-2p_1-2p_2-2N)_{s} (\frac12+2p_3)_{s}        }{ N!  (\frac12\!-\!c\!-\!N)_{s} (1\!-\!2p_1\!-\!2p_2\!-\!2N)_{2s} (\frac12\!-\!2p_1\!-\!2p_2\!-\!2p_3\!-\!2N)_{s} (1\!-\!2p_1\!-\!2N\!+\!2s)_{N-s} } \\[0.5em]
 w^{(2)}_{2s+1,2N}   &= \frac{ (-1)^s \binom{N-1}{s}(\frac32+c-2p_1-N)_{s} (1-2p_1-2p_2-2N)_{s} (\frac12+2p_3)_{s+1}      }{ (N\!-\!1)!  (\frac12\!-\!c\!-\!N)_{s} (1\!-\!2p_1\!-\!2p_2\!-\!2N)_{2s+1} (\frac12\!-\!2p_1\!-\!2p_2\!-\!2p_3\!-\!2N)_{s+1} (2\!-\!2p_1\!-\!2N\!+\!2s)_{N-s} } \\[0.5em]
 w^{(2)}_{2s,2N+1}   &= \frac{ (-1)^s \binom{N}{s}(\frac12+c-2p_1-N)_{s} (-2p_1-2p_2-2N)_{s} (\frac12+2p_3)_{s}         }{ N! (-\frac12\!-\!c\!-\!N)_{s} (-2p_1\!-\!2p_2\!-\!2N)_{2s} (-\frac12\!-\!2p_1\!-\!2p_2\!-\!2p_3\!-\!2N)_{s} (-2p_1\!-\!2N\!+\!2s)_{N+1-s} } \\[0.5em]
 w^{(2)}_{2s+1,2N+1} &= \frac{ (-1)^{s+1} \binom{N}{s} (\frac12+c-2p_1-N)_{s+1} (-2p_1-2p_2-2N)_{s} (\frac12+2p_3)_{s+1} }{ N! (-\frac12\!-\!c\!-\!N)_{s+1} (-2p_1\!-\!2p_2\!-\!2N)_{2s+1} (-\frac12\!-\!2p_1\!-\!2p_2\!-\!2p_3\!-\!2N)_{s+1} (1\!-\!2p_1\!-\!2N\!+\!2s)_{N-s} } 
\end{aligned}
\end{align}
}
and the normalization coefficients $H_{n_1,n_2,N}$ are given in the appendix.  
\end{proposition}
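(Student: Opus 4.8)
The plan is to exploit the product structure \eqref{bivBI} and to establish \eqref{ORbivBI} by iterating the univariate orthogonality relation, in the spirit of the Tratnik / Gasper--Rahman construction: I would carry out the inner sum over $r$ first (at fixed $s$), which collapses the first factor and produces $\delta_{n_1,m_1}$, and only then the outer sum over $s$, which collapses the second factor and produces $\delta_{n_2,m_2}$. The two signature features that make this work are that $B^{(1)}_{n_1}(z_1)$ depends on $s$ only through $z_2=z_2(s)$ in its parameters \eqref{paramB1}, while $B^{(2)}_{n_2}(z_2)$ depends on the inner data only through the degree $n_1$ in its parameters \eqref{paramB2}; thus after the inner sum forces $n_1=m_1$ the two surviving second factors have identical parameters.

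For the inner sum, fix $s$ (hence $z_2=z_2(s)$). With parameters \eqref{paramB1}, $B^{(1)}_{n_1}(z_1(r))$ is a univariate Bannai--Ito polynomial in $z_1(r)-\tfrac14$. First I would substitute \eqref{gridz2} into \eqref{paramB1} and verify that the parameters realize a univariate truncation condition with truncation parameter $N-s$: a direct computation gives $r_1^{(1)}-\rho_2^{(1)}=\tfrac{N-s+1}{2}$ when $N-s$ is even (type $i)$) and $r_1^{(1)}+r_2^{(1)}=\tfrac{N-s+1}{2}$ when $N-s$ is odd (type $iii)$). I would then check that the grid \eqref{gridz1}, at fixed $s$ and $r=0,\dots,N-s$, coincides with the univariate grid \eqref{gridi} (resp. \eqref{gridiii}) under the relabelling $\rho_1\leftrightarrow\rho_2$ (resp. $(\rho_1,\rho_2,r_1,r_2)\to-(r_1,r_2,\rho_1,\rho_2)$) dictated by the truncation type, and that $w^{(1)}_{r,N-s}$ in \eqref{w1} reproduces \eqref{Ortho-Weight} under the same relabelling. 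Since $w^{(1)}_{r,N-s}$ carries the factor $(-\tilde s)_r$, it vanishes for $r>N-s$, so the inner sum runs effectively over the $N-s+1$ nodes and the univariate relation applies verbatim, yielding $h^{(1)}_{n_1}(s)\,\delta_{n_1,m_1}$, where $h^{(1)}_{n_1}(s)$ is the univariate norm \eqref{Ortho-Norm} (resp. \eqref{Ortho-Norm3}) with $N\to N-s$, $n\to n_1$, evaluated at the $s$-dependent parameters; note that this norm vanishes for $n_1>N-s$.

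Setting $n_1=m_1$, the parameters \eqref{paramB2} of the two remaining second factors coincide and $B^{(2)}_{n_2}(z_2(s))$ is a univariate Bannai--Ito polynomial in $(-1)^{n_1}z_2(s)-\tfrac14$. The signs $(-1)^{n_1+N}$ in \eqref{paramB2} are precisely what guarantees a truncation condition with parameter $N-n_1$ for every parity: one finds $r_1^{(2)}-\rho_2^{(2)}=\tfrac{N-n_1+1}{2}$ (type $i)$) when $n_1+N$ is even and $r_1^{(2)}+r_2^{(2)}=\tfrac{N-n_1+1}{2}$ (type $iii)$) when $n_1+N$ is odd, with \eqref{gridz2} matching \eqref{gridi}/\eqref{gridiii} accordingly. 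The decisive step is the weight-fusion identity $w^{(2)}_{s,N}\,h^{(1)}_{n_1}(s)=\widetilde w^{(2)}_{s}$, where $\widetilde w^{(2)}_s$ is the univariate weight \eqref{Ortho-Weight} of the second family: the $s$-dependence carried by the norm $h^{(1)}_{n_1}(s)$ must combine with the explicit Pochhammer products of \eqref{w2} to reconstitute exactly that weight. Because $h^{(1)}_{n_1}(s)=0$ for $s>N-n_1$, the effective outer range is $s=0,\dots,N-n_1$, which is exactly the orthogonality grid of the second family; applying its univariate orthogonality then gives $\delta_{n_2,m_2}$ and identifies $H_{n_1,n_2,N}$ with the second family's univariate norm.

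I expect the main obstacle to be this weight-fusion identity together with the pervasive parity bookkeeping. One must treat the parities of $N$, of the grid indices $r,s$, and of $n_1$ separately --- this is exactly what produces the four-case presentations of \eqref{w1} and \eqref{w2} and the switch between truncation types $i)$ and $iii)$ --- and then verify a nontrivial Pochhammer-symbol identity expressing $w^{(2)}_{s,N}\,h^{(1)}_{n_1}(s)$ as the second family's weight. The norm $h^{(1)}_{n_1}(s)$ contains Pochhammer factors in the $s$-dependent parameters of \eqref{paramB1}, and showing that these telescope against the factors of \eqref{w2}, so that the product is independent of the internal structure of the first family and equals precisely the correct $s$-weight of the second family, is the computational heart of the argument. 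Everything else reduces to the two applications of the established univariate orthogonality relation and to the verification that the truncation conditions \eqref{Tcond1}--\eqref{Tcond2} hold in each parity sector.
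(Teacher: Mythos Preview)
Your proposal is correct and follows essentially the same route as the paper: both proofs iterate the univariate Bannai--Ito orthogonality, first in $r$ at fixed $s$ (using the truncation of type $i)$ or $iii)$ for $B^{(1)}_{n_1}$ with truncation parameter $N-s$), then in $s$ (using the analogous truncation for $B^{(2)}_{n_2}$ with parameter $N-n_1$), and both note that the sums can be extended to $0,\dots,N$ because the extra terms vanish. The only minor remark is that your ``weight-fusion identity'' $w^{(2)}_{s,N}\,h^{(1)}_{n_1}(s)=\widetilde w^{(2)}_{s}$ should in general be stated as a proportionality in $s$, with an $n_1$-dependent constant absorbed into $H_{n_1,n_2,N}$; the paper phrases this step the other way around, deriving the formulas \eqref{w2} from the requirement that $h^{(1)}_{n_1,N-s}\,w^{(2)}_{s,N}$ reproduce the second family's univariate weight, but the content is the same.
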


\begin{proof}
Our main tool will be the orthogonality relation of the univariate BI polynomials which depends on the truncation conditions \eqref{Tcond1} and \eqref{Tcond2}. 

Notice that definition \eqref{bivBI} involves a truncation parameter $N$ inherited from the $q$-Racah polynomials in the limit process and which appears in the parameters \eqref{paramB2}. This implies that the definition comes with truncation conditions that are already built-in. Indeed, one can easily check that $B_{n_2}^{(2)}(z_2)$ satisfies a mixture of type $i)$ \eqref{Tcond1} and type $iii)$ \eqref{Tcond2} truncation conditions : 
\begin{align} \label{truncB2}
 \begin{cases}
  r_1^{(2)}-\rho_2^{(2)} = \frac{N-n_1+1}{2} \qquad \text{if $N-n_1$ even,} \\[0.5em]
  r_1^{(2)}+r_2^{(2)} = \frac{N-n_1+1}{2} \qquad \text{if $N-n_1$ odd} 
 \end{cases}
\end{align}
with truncation parameters $N-n_1$. Both conditions impose grid points for the variable $z_2$ :
\begin{align} 
 (-1)^{n_1} z_2-\tfrac14 = x_s
\end{align}
where $x_s$ is given by \eqref{gridi} when $N+n_1$ is even and by \eqref{gridiii} when $N+n_1$ is odd. Inspecting this equation for each possible parities of $n_1$ and $N$, one obtains \eqref{gridz2}. Substituting this relation for $z_2$ in the parameters \eqref{paramB1}, one can now check that the truncation conditions satisfied by $B_{n_1}$ are
\begin{align} \label{truncB1}
 \begin{cases}
 r_1^{(1)}-\rho_2^{(1)} = \frac{N-s+1}{2} \qquad \text{if $N-s$ even,} \\[0.5em]
 r_1^{(1)}+r_2^{(1)}  = \frac{N-s+1}{2} \qquad \text{if $N-s$ odd.} 
 \end{cases}
\end{align}
Thus, the polynomial $B_{n_1}$ also satisfies the mixed type $i)$ and type $iii)$ truncation conditions with parameter $N-s$. Again, both conditions impose grid points for the variable $z_1$ :
\begin{align}
 z_1-\tfrac14 = x_r
\end{align}
with $x_r$ given by \eqref{gridi} for $N-s$ even and \eqref{gridiii} for $N-s$ odd. This amounts to \eqref{gridz1}. 
Recall that we are looking for an orthogonality relation of the form
\begin{align} \label{ORwish}
 \sum_{r,s} w^{(1)}_{r,N-s}w^{(2)}_{s,N} B_{n_1,n_2}(z_1(r),z_2(s))B_{m_1,m_2}(z_1(r),z_2(s)) = H_{n_1,n_2,N} \delta_{n_1,m_1}\delta_{n_2,m_2}.
\end{align}
In view of \eqref{truncB1}, the polynomial $B_{n_1}^{(1)}(z_1)$ satisfies the orthogonality relation
\begin{align} \label{ORB1}
 \sum_{r=0}^{N-s} w^{(1)}_{r,N-s} B_{n_1}^{(1)}(z_1(r))B_{m_1}^{(1)}(z_1(r)) = h_{n_1,N-s}^{(1)} \delta_{n_1,m_1}.
\end{align}
Using \eqref{Ortho-Weight}, one obtains the weights \eqref{w1}. The normalization coefficients are retrieved from \eqref{Ortho-Norm} and \eqref{Ortho-Norm3} :
{\small
\begin{align}
\begin{aligned}
  h_{2n_1,2\tilde{s}}^{(1)}     &= \frac{ \tilde{s}!~n_1!  (2p_2)_{n_1} (\frac12\!+\!c\!+\!n_1\!+\!2p_2)_{\tilde{s}-n_1} (\frac12\!+\!c\!+\!2p_2\!+\!\tilde{s})_{n_1} (\frac32\!+\!c\!-\!2p_1\!-\!\tilde{s})_{n_1} (1\!-\!2p_1\!-\!2\tilde{s})_{\tilde{s}} }{ (\tilde{s}-n_1)! (\frac12+c+n_1)_{\tilde{s}-n_1} (\frac12+c+n_1+2p_2)_{n_1}^2 (1-2p_1-2p_2-2\tilde{s})_{\tilde{s}-n_1} }, \\[0.5em]
  h_{2n_1+1,2\tilde{s}}^{(1)}   &= \frac{ \tilde{s}!~n_1! (2p_2)_{n_1+1} (\frac12\!+\!c\!+\!n_1\!+\!2p_2)_{\tilde{s}-n_1} (\frac12\!+\!c\!+\!2p_2\!+\!\tilde{s})_{n_1+1} (\frac32\!+\!c\!-\!2p_1\!-\!\tilde{s})_{n_1} (1\!-\!2p_1\!-\!2\tilde{s})_{\tilde{s}} }{ (\tilde{s}-n_1-1)! (\frac12+c+n_1+1)_{\tilde{s}-n_1-1} (\frac12+c+n_1+2p_2)_{n_1+1}^2 (1-2p_1-2p_2-2\tilde{s})_{\tilde{s}-n_1} }, \\[0.5em]
  h_{2n_1,2\tilde{s}+1}^{(1)}   &= \frac{ \tilde{s}!~n_1! (2p_2)_{n_1} (\frac12\!+\!c\!+\!n_1\!+\!2p_2)_{\tilde{s}+1-n_1} (\frac32\!+\!c\!+\!2p_2\!+\!\tilde{s})_{n_1} (\frac12\!+\!c\!-\!2p_1\!-\!\tilde{s})_{n_1} (-2p_1\!-\!2\tilde{s})_{\tilde{s}+1} }{ (\tilde{s}-n_1)! (\frac12+c+n_1)_{\tilde{s}+1-n_1} (\frac12+c+n_1+2p_2)_{n_1}^2 (-2p_1-2p_2-2\tilde{s})_{\tilde{s}+1-n_1} }, \\[0.5em]
  h_{2n_1+1,2\tilde{s}+1}^{(1)} &= \frac{\tilde{s}!~n_1! (2p_2)_{n_1+1} (\frac12\!+\!c\!+\!n_1\!+\!2p_2)_{\tilde{s}+1-n_1} (\frac12\!+\!c\!+\!2p_2\!+\!\tilde{s})_{n_1+1} (\frac12\!+\!c\!-\!2p_1\!-\!\tilde{s})_{n_1} (-2p_1\!-\!2\tilde{s})_{\tilde{s}+1} }{ (\tilde{s}-n_1)! (\frac32+c+n_1)_{\tilde{s}-n_1} (\frac12+c+n_1+2p_2)_{n_1+1}^2 (-2p_1-2p_2-2\tilde{s})_{\tilde{s}-n_1} }.
\end{aligned}
\end{align}
}
Using \eqref{ORB1} in \eqref{ORwish}, one gets that 
\begin{align}
 \sum_s h_{n_1,N-s}^{(1)} w^{(2)}_{s,N} B_{n_2}^{(2)}(z_2(s))B_{m_2}^{(2)}(z_2(s)) = H_{n_1,n_2,N} \delta_{n_2,m_2}
\end{align}
should be the orthogonality relation satisfied by the univariate BI polynomials $B_{n_2}^{(2)}(z_2)$. Indeed, using \eqref{truncB2} and the corresponding weights from section 1, one readily checks that the $w^{(2)}_{s,N}$ are given by \eqref{w2} and the normalization coefficients are those provided in the appendix. 

Hence the bivariate Bannai-Ito polynomials obey the orthogonality relation
\begin{align}
 \sum_{s=0}^{N-n_1} \sum_{r=0}^{N-s} w^{(1)}_{r,N-s}w^{(2)}_{s,N} B_{n_1,n_2}(z_1(r),z_2(s))B_{m_1,m_2}(z_1(r),z_2(s)) = H_{n_1,n_2,N} \delta_{n_1,m_1}\delta_{n_2,m_2}
\end{align}
with the weights, the grids and the normalization coefficients given. It is not hard to verify that both sums can be extended from $0$ to $N$ without changing the results. Indeed, one can check that all the extra terms are in fact zero because of the weights.
\end{proof}

\section{Limit from the bivariate Askey-Wilson polynomials}

In this section, a different definition for the bivariate Bannai-Ito polynomials via a $q\to-1$ limit of the Askey-Wilson polynomials is investigated. While very similar to the approach from $q$-Racah polynomials, the main difference lies in the fact that the Askey-Wilson polynomials do not have truncation conditions. Hence, no truncation parameter $N$ is carried through the limit and a definition for untruncated bivariate Bannai-Ito polynomials is obtained. This definition has the advantage that its multispectrality relations can be expressed in terms of operators acting directly on the variables instead of acting on the orthogonality grids. The connection between both approaches is established.

\subsection{Untruncated bivariate Bannai-Ito polynomials}

\begin{definition} The untruncated bivariate Bannai-Ito polynomials are defined by 
\begin{align} \label{bivBI2}
 B_{n_1,n_2}(z_1,z_2) =\ &B_{n_1} \left(z_1-\tfrac14 ;\ \beta, z_2+\epsilon-\tfrac14, \alpha, z_2-\epsilon+\tfrac14 \right) \\[0.5em] \notag
                      &\times B_{n_2}\left((-1)^{n_1}z_2-\tfrac14;\ \beta+\epsilon+\tfrac{n_1}{2}, (1-\pi_{n_1})\gamma+\pi_{n_1}\delta, \alpha-\epsilon-\tfrac{n_1}{2}, (\pi_{n_1}-1)\delta-\pi_{n_1}\gamma   \right)
\end{align}
in terms of the monic BI polynomials $B_n(x)$ and where 
\begin{align}
 \pi_{n} = \frac{1+(-1)^n}{2} = \begin{cases}
                                 1 \quad \text{$n$ even,} \\
				 0 \quad \text{$n$ odd,}
                                \end{cases}
\end{align}
is the indicator function of even numbers.
\end{definition}

Note that Definition 2 reduces to Definition 1 \eqref{bivBI} of section 2 if we let 
\begin{align}
\begin{aligned} \label{reducing2to1}
 &\alpha\to \tfrac12-p_1, \qquad \beta \to \tfrac12+c-p_1, \qquad \epsilon \to p_2, \\[0.5em]
 &\gamma = \begin{cases}
           -\tfrac{N}{2}-p_1-p_2 \quad &\text{$N$ even,} \\
           \tfrac{N-1}{2}+p_1+p_2+p_3 \quad&\text{$N$ odd,}
          \end{cases}
\qquad
  \delta = \begin{cases}
            \tfrac{N-1}{2}+p_1+p_2+p_3 \quad &\text{$N$ even,} \\
            -\tfrac{N}{2}-p_1-p_2 \quad&\text{$N$ odd.}
          \end{cases}
\end{aligned}
\end{align}

To motivate Definition 2, let us first consider the Askey-Wilson polynomials 
\begin{align}
 \hat{p}_n(x;a,b,c,d) = \pGq{4}{3}{q^{-n},abcdq^{n-1},az,az^{-1}}{ab,ac,ad}{q;q}
\end{align}
in the variable $x=\tfrac12(z+z^{-1})$. The bivariate Askey-Wilson polynomials depending on five parameters $a,b,c,d,a_2$ as introduced by Gasper and Rahman in \cite{GR2} are 
\begin{align} \label{AWOPs}
\begin{aligned}
\hat{P}_{n_1,n_2}(x_1,x_2) = \hat{p}_{n_1}(x_1;a,b,a_2z_2,a_2z_2^{-1})\hat{p}_{n_2}(x_2;aa_2q^{n_1},ba_2q^{n_1},c,d).
%
\end{aligned}
\end{align}
The limiting procedure will be the same as in the previous section. Briefly, the choice of parametrization amounts to a selection of phases in front of each parameter defined as exponentials. Expanding the hypergeometric functions as series and expanding the Pochhammer symbols as products, one obtains an expression of the form \eqref{dummycoef} and must select the phases in such a way that there are always 2 zeroes and 2 twos in both numerators and denominators for each value of the dummy index $i$. Again, this choice is not unique, but all possibilities can again be shown to yield equivalent Bannai-Ito polynomials under affine transformations of the parameters. 

We take
\begin{align} \label{paramAW}
\begin{aligned}
 &q\to -e^t, \quad t\to0, \qquad z_1 \to e^{ t y_1}, \quad z_2 \to e^{ t y_2}, \\
 &a\to ie^{t\tilde{a}}, \quad b\to -ie^{t\tilde{b}}, \quad c\to ie^{t\tilde{c}}, \quad d\to -ie^{t\tilde{d}}, \quad a_2\to e^{t \tilde{a_2}}
\end{aligned}
\end{align}
with the reparametrization
\begin{align} \label{reparamAW}
 y_1 = -2z_1, \quad y_2=-2z_2, \quad \tilde{a} = -2\alpha+\tfrac12, \quad \tilde{b} = 2\beta+\tfrac12,\quad 
 \tilde{c}=2\gamma+\tfrac12, \quad \tilde{d}=2\delta+\tfrac12, \quad \tilde{a_2}=2\epsilon.
\end{align}
This gives
\begin{align}
\begin{aligned}
 P^{(1)}_{n_1}(\cos\theta_1) \to \ &\frac{1}{\eta_{n_1}} B_{n_1} \left(z_1-\tfrac14 ;\ \beta, z_2+\epsilon-\tfrac14, \alpha, z_2-\epsilon+\tfrac14 \right) \\ 
 P^{(2)}_{n_2}(\cos\theta_2) \to \ &\frac{1}{\eta_{n_2}} B_{n_2}\left((-1)^{n_1}z_2-\tfrac14;\ \beta+\epsilon+\tfrac{n_1}{2}, (1\!-\!\pi_{n_1}\!)\gamma+\pi_{n_1}\delta, \alpha-\epsilon-\tfrac{n_1}{2}, (\pi_{n_1}\!\!-\!1)\delta-\pi_{n_1}\gamma   \right).
\end{aligned}
\end{align}
The definition for the untruncated bivariate Bannai-Ito polynomials is thus obtained by taking the product of the corresponding monic Bannai-Ito polynomials, again dropping the normalization factors.

\subsection{Multispectrality of the bivariate BI polynomials}

Iliev demonstrated the multispectrality of the bivariate Askey-Wilson polynomials \eqref{AWOPs} (with a different normalization) in \cite{Iliev}. This section examines how the multispectrality relations of these polynomials are carried in the $q\to-1$ limit. 
Recalling the definition of the shift operator $T^m_{x}$ and the reflection operator $R_x$ given after \eqref{DiffEqn2}, we have the following equations in the variables $z_1$ and $z_2$.

\begin{proposition}
  The untruncated bivariate Bannai-Ito polynomials \eqref{bivBI2} obey the difference equations
  \begin{align}
  L_1 B_{n_1,n_2}(z_1,z_2) &= \mu_{n_1} B_{n_1,n_2}(z_1,z_2)         \label{DiffEqnL1} \\
  L_2 B_{n_1,n_2}(z_1,z_2) &= \lambda_{n_1,n_2} B_{n_1,n_2}(z_1,z_2) \label{DiffEqnL2}
  \end{align}
  for the operators
  \begin{align} \label{L1}
  L_1 = \frac{(\epsilon-z_1+z_2)(z_1-\beta-\tfrac14)}{2(z_1-\tfrac14)} (T^{-1/2}_{z_1}R_{z_1}-1) + \frac{(\epsilon+z_1-z_2)(z_1-\alpha+\tfrac14)}{2(z_1+\tfrac14)} (T^{1/2}_{z_1}R_{z_1} - 1)
  \end{align}
  and 
  \begin{align} \label{L2}
  L_2 = \sum_{i,j=-1}^{1} c_{i,j} T_{z_1}^{i/2}R_{z_1}^{i}   T_{z_2}^{j/2}R_{z_2}^{j}
  \end{align}
  with coefficients
  \begin{align} \label{L2coef}
  \begin{aligned}
  c_{-1,-1} &=\frac{(z_1-\alpha+\tfrac14) (z_2+\gamma+\tfrac14) (\epsilon+z_1+z_2+\tfrac12)}{4(z_1+\tfrac14)(z_2+\tfrac14)} \\[0.5em]
  c_{-1,0}  &=\frac{ (z_1-\alpha+\tfrac14) (\epsilon+z_1-z_2) (\delta(z_2+\tfrac14)-\gamma(z_2-\tfrac14))}{4(z_1+\tfrac14) (z_2-\tfrac14) (z_2+\tfrac14)} \\[0.5em]
  c_{-1,1}  &=\frac{(z_1-\alpha+\tfrac14) (z_2-\delta-\tfrac14) (\epsilon+z_1-z_2)}{4(z_1+\tfrac14) (z_2-\tfrac14)} \\[0.5em]
  c_{0,-1}  &=\frac{ (z_2+\gamma+\tfrac14) (\epsilon-z_1+z_2) (\alpha(z_1-\tfrac14)+\beta(z_1+\tfrac14))}{4(z_1-\tfrac14) (z_1+\tfrac14) (z_2+\tfrac14)} \\[0.5em]
  c_{0,0}   &= \alpha (\gamma -\delta +\tfrac12)+\beta(\gamma-\delta-\tfrac12)-\tfrac12(\gamma+\delta+\tfrac12)-\epsilon \\[0.5em] & \qquad +\frac{ (\epsilon+4z_1 z_2-\tfrac14) (\alpha(z_1-\tfrac14)+\beta(z_1+\tfrac14)) (\delta(z_2+\tfrac14)-\gamma(z_2-\tfrac14))}{4(z_1-\tfrac14) (z_1+\tfrac14) (z_2-\tfrac14) (z_2+\tfrac14)} \\[0.5em]   
  c_{0,1}   &=\frac{ (z_2-\delta-\tfrac14) (\epsilon+z_1-z_2) (\alpha(z_1-\tfrac14)+\beta(z_1+\tfrac14))}{4(z_1-\tfrac14) (z_1+\tfrac14) (z_2-\tfrac14)} \\[0.5em]
  c_{1,-1}  &=\frac{(z_1-\beta-\tfrac14) (z_2+\gamma +\tfrac14) (\epsilon-z_1+z_2)}{4(z_1-\tfrac14) (z_2+\tfrac14)} \\[0.5em]
  c_{1,0}   &=\frac{ (z_1-\beta-\tfrac14) (\epsilon-z_1+z_2) (\delta(z_2+\tfrac14)-\gamma(z_2-\tfrac14))}{4(z_1-\tfrac14) (z_2-\tfrac14) (z_2+\tfrac14)} \\[0.5em]
  c_{1,1}   &=\frac{(z_1-\beta-\tfrac14) (z_2-\delta-\tfrac14) (\epsilon-z_1-z_2+\tfrac12)}{4 (z_1-\tfrac14) (z_2-\tfrac14)}.
  \end{aligned}
  \end{align}
  The eigenvalues are given by
  \begin{align}
  \mu_{n_1} = \begin{cases}
		    \frac{n_1}{2} \quad&\text{$n_1$ even,} \\
		    -\frac{n_1}{2}+\alpha-\beta-2\epsilon \quad&\text{$n_1$ odd,}
		  \end{cases}
  \end{align}
  and
  \begin{align}
  \lambda_{n_1,n_2} =  \begin{cases}
				  \frac{n_1+n_2}{2} \quad&\text{$n_1+n_2$ even,} \\
				  \frac{n_1+n_2+1}{2} -\alpha+\beta+\gamma+\delta+2\epsilon \quad&\text{$n_1+n_2$ odd.}
			\end{cases}
  \end{align}
\end{proposition}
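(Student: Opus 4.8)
The plan is to treat the two eigenvalue equations separately: \eqref{DiffEqnL1} reduces almost immediately to the univariate theory of Section 1, whereas \eqref{DiffEqnL2} is the substantial part and is most cleanly obtained as the $q\to-1$ limit of Iliev's bispectral operator for the bivariate Askey-Wilson polynomials \cite{Iliev}, in exact parallel with the limiting construction of Section 3.1.

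For \eqref{DiffEqnL1}, I would first perform the change of variable $x=z_1-\tfrac14$ in the univariate operator $\mathcal{L}$ of \eqref{DiffEqn2}, taking for $(\rho_1,\rho_2,r_1,r_2)$ the parameters $(\beta,\,z_2+\epsilon-\tfrac14,\,\alpha,\,z_2-\epsilon+\tfrac14)$ of the first factor of \eqref{bivBI2}. A direct substitution yields
\[ x-\rho_1=z_1-\beta-\tfrac14,\quad x-\rho_2=z_1-z_2-\epsilon,\quad x-r_1+\tfrac12=z_1-\alpha+\tfrac14,\quad x-r_2+\tfrac12=z_1-z_2+\epsilon, \]
together with $2x=2(z_1-\tfrac14)$ and $2x+1=2(z_1+\tfrac14)$. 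Under this substitution the reflection $R_x\colon x\mapsto-x$ becomes $T^{-1/2}_{z_1}R_{z_1}$ and the composite $T^1_xR_x\colon x\mapsto-x-1$ becomes $T^{1/2}_{z_1}R_{z_1}$, so that $\mathcal{L}$ is carried exactly onto the operator $L_1$ of \eqref{L1} (the apparent sign change in the first term being absorbed by $\epsilon-z_1+z_2=-(x-\rho_2)$). Since $L_1$ contains only $z_1$-shifts and reflections, it annihilates the $z_2$-dependence of the second factor $B^{(2)}_{n_2}(z_2)$, which factors out, and the first factor is an eigenfunction by \eqref{DiffEqn1}. The eigenvalue is read from \eqref{DiffEqn3}: it is $n_1/2$ for $n_1$ even, while for $n_1$ odd the combination $r_1+r_2-\rho_1-\rho_2=\alpha-\beta-2\epsilon+\tfrac12$ is independent of $z_2$, giving $-\tfrac{n_1}{2}+\alpha-\beta-2\epsilon=\mu_{n_1}$. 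This proves \eqref{DiffEqnL1}.

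For \eqref{DiffEqnL2}, I would start from the bivariate Askey-Wilson difference operator that diagonalizes the $\hat P_{n_1,n_2}$ of \eqref{AWOPs}, as constructed by Iliev \cite{Iliev}, insert the parametrization \eqref{paramAW}--\eqref{reparamAW}, and let $t\to0$. The central mechanism, already exploited in the univariate Askey-Wilson-to-Bannai-Ito limit of \cite{DunklShift}, is that because $q\to-1$ is negative the $q$-dilations $z_i\mapsto q^{\pm1}z_i$ degenerate, after $q\to-e^t$ and $z_i\to e^{ty_i}$, into the Dunkl-type operators $T^{\pm1/2}_{z_i}R_{z_i}$; this produces the nine building blocks $T_{z_1}^{i/2}R_{z_1}^{i}T_{z_2}^{j/2}R_{z_2}^{j}$ of \eqref{L2}. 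The nine coefficients are then recovered as $0/0$ limits evaluated by l'H\^{o}pital's rule and matched against \eqref{L2coef}, while the $q$-number eigenvalue of Iliev's operator splits, according to the parity of $n_1+n_2$ induced by the factor $(-1)$ in $q\to-e^t$, into the two cases of $\lambda_{n_1,n_2}$. Crucially, Iliev's operator is independent of $n_1$ both before and after the limit, which is consistent with the $n_1$-dependence of the second factor of \eqref{bivBI2} and with $L_2$ being a single fixed operator. Since $\hat P_{n_1,n_2}\to B_{n_1,n_2}$ (up to the normalizations dropped in Definition 2), the eigenvalue equation passes to the limit and gives \eqref{DiffEqnL2}.

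The main obstacle is the $L_2$ computation: one must control Iliev's operator termwise through the limit, handling both the emergence of the reflections from the $q$-dilations with the careful bookkeeping of the phases $\pm i$ assigned to $a,b,c,d$ in \eqref{paramAW}, and the nine coefficient limits. A self-contained alternative would verify \eqref{DiffEqnL2} directly, but this is delicate precisely because the operators $T^{j/2}_{z_2}R_{z_2}^{j}$ act not only on $B^{(2)}_{n_2}(z_2)$ but also on the $z_2$-dependent parameters $z_2+\epsilon-\tfrac14$ and $z_2-\epsilon+\tfrac14$ of the \emph{first} factor, so shifting $z_2$ produces a first factor with shifted parameters. Re-expressing these contiguous Bannai-Ito polynomials through the $z_1$-operators is exactly the coupling that generates the off-diagonal $c_{i,j}$ with $i,j\neq0$, and carrying it out by hand would require contiguity relations for the $B_{n_1}$ not developed in this excerpt; for this reason the limiting route is the more economical one.
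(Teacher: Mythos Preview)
Your proposal is correct and follows essentially the same route as the paper. For $L_1$ you spell out explicitly the change of variable $x=z_1-\tfrac14$ that the paper only summarizes by ``follows directly from the univariate Dunkl difference equation''; for $L_2$ both you and the paper obtain the operator as the $q\to-1$ limit of Iliev's bivariate Askey--Wilson difference operator, with the paper making the common rescaling explicit as $c_{i,j}=\lim_{q\to-1}C_{i,j}/(4(1+q))$ and $\lambda_{n_1,n_2}=\lim_{q\to-1}\Lambda_{n_1,n_2}/(4(1+q))$ where your ``$0/0$ limits by l'H\^opital'' land.
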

  
\begin{proof}
  Consider the renormalized Askey-Wilson polynomials 
  \begin{align}
    p_n(x;a,b,c,d) = \xi_{n}(a,b,c,d)\hat{p}_n(x;a,b,c,d)
  \end{align}
  where
  \begin{align} \label{xi}
  \xi_n(a,b,c,d) = \frac{(ab,ac,ad;q)_n}{a^n}
  \end{align}
  and their corresponding bivariate extension
  \begin{align} \label{normalizedAW}
  P_{n_1,n_2}(x_1,x_2) = \zeta_{n_1,n_2} p_{n_1}(x_1;a,b,a_2z_2,a_2z_2^{-1}) p_{n_2}(x_2;aa_2q^{n_1},ba_2q^{n_1},c,d)
  \end{align}
  with normalization
  \begin{align} \label{zeta}
  \zeta_{n_1,n_2} = \frac{c^{n_1+n_2}a_2^{n_1}}{(a_2^2;q)_{n_1} (aca_2;q)_{n_1+n_2} (bca_2;q)_{n_1+n_2} (cd;q)_{n_2}}.
  \end{align}
  They obey the $q$-difference equation \cite{Iliev}
  \begin{align}
  \mathcal{L} P_{n_1,n_2}(x_1,x_2) = \Lambda_{n_1,n_2} P_{n_1,n_2}(x_1,x_2)
  \end{align}
  where 
  \begin{align} \label{mathcalL}
  \mathcal{L} = \sum_{i,j=-1}^{1} C_{i,j} E_{q,z_1}^{i}E_{q,z_2}^{j}
  \end{align}
  in terms of the shifts operators $E_{q,z}$ which send $z\to qz$. The explicit expression for the coefficients $C_{i,j}$ can be found in the appendix and the eigenvalues are given by 
  \begin{align}
    \Lambda_{n_1,n_2} = \left(q^{-n_1-n_2}-1\right) \left(1-a a_2^2 b c d q^{n_1+n_2-1}\right).
  \end{align}
  The difference equation for the bivariate Bannai-Ito polynomials is found as a limit of this relation. The operator $\mathcal{L}$ will correspond to the operator $L_2$ given by \eqref{L2} in the limit \eqref{paramAW} with the reparametrization \eqref{reparamAW}. The coefficients $c_{i,j}$ are obtained by the limits
  \begin{align}
  c_{i,j} = \lim_{q\to-1}\ \frac{C_{i,j}}{4(1+q)}
  \end{align}
  and the eigenvalues by
  \begin{align}
  \lambda_{n_1,n_2} = \lim_{q\to-1} \frac{\Lambda_{n_1,n_2}}{4(1+q)} = \begin{cases}
								  \frac{n_1+n_2}{2} \quad&\text{$n_1+n_2$ even,} \\
								  \frac{n_1+n_2+1}{2} -\alpha+\beta+\gamma+\delta+2\epsilon \quad&\text{$n_1+n_2$ odd.}
								\end{cases}
  \end{align}
  The factor $\tfrac14$ is just for convenience. The bivariate BI polynomials \eqref{bivBI2} will thus satisfy the difference equation \eqref{DiffEqnL2}.

  The difference equation \eqref{DiffEqnL1} follows directly from the univariate Dunkl difference equation given by \eqref{DiffEqn1}, \eqref{DiffEqn2} and \eqref{DiffEqn3}. It is also possible to obtain it from a $q\to-1$ limit of the bivariate Askey-Wilson polynomials second $q$-difference equation \cite{Iliev}.
\end{proof}

Let us now turn to the recurrence relations. 

\begin{proposition}
The untruncated bivariate Bannai-Ito polynomials $B_{n_1,n_2}(z_1,z_2)$ defined in \eqref{bivBI2} verify the 3-term recurrence relation
\begin{align}
 \left((-1)^{n_1}z_2-\tfrac14\right) B_{n_1,n_2}(z_1,z_2) = B_{n_1,n_2+1}(z_1,z_2) &+ (\beta+\epsilon+\tfrac{n_1}{2}-A_{n_2}-C_{n_2}) B_{n_1,n_2}(z_1,z_2) \\ &+ A_{n_2-1}C_{n_2}B_{n_1,n_2-1}(z_1,z_2) \notag
\end{align}
where the coefficients $A_{n_2}$ and $C_{n_2}$ are given by \eqref{Coeff-An} with the parameters $\rho_1,\rho_2,r_1,r_2$ being those of the second BI polynomial of \eqref{bivBI2}.

They also satisfy the 9-term recurrence relation
\begin{align} \label{RelRecBI9}
\begin{aligned}
  (z_1-\alpha^2+\beta^2)B_{n_1,n_2}(z_1,z_2) = \quad &\theta^{(1)}_{n_1,n_2}B_{n_1+1,n_2\phantom{-1}} + \theta^{(2)}_{n_1,n_2}B_{n_1+1,n_2-1} +\theta^{(3)}_{n_1,n_2}B_{n_1+1,n_2-2} \\
	                                            +\ &\theta^{(4)}_{n_1,n_2}B_{n_1,n_2+1\phantom{-1}} +   \theta^{(5)}_{n_1,n_2}B_{n_1,n_2\phantom{-1}\phantom{-1}}     +\theta^{(6)}_{n_1,n_2}B_{n_1,n_2-1} \\
	                                            +\ &\theta^{(7)}_{n_1,n_2}B_{n_1-1,n_2+2} + \theta^{(8)}_{n_1,n_2}B_{n_1-1,n_2+1} +\theta^{(9)}_{n_1,n_2}B_{n_1-1,n_2}
\end{aligned}
\end{align}
where the explicit expression for the coefficients $\theta^{(i)}_{n_1,n_2}$ are given in the appendix. 
\end{proposition}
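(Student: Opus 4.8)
The plan is to exploit the product structure of \eqref{bivBI2}: write $B_{n_1,n_2}=B^{(1)}_{n_1}(z_1)\,B^{(2)}_{n_2}(z_2)$, where the first factor is a monic BI polynomial in $z_1-\tfrac14$ whose parameters $\rho_2=z_2+\epsilon-\tfrac14$ and $r_2=z_2-\epsilon+\tfrac14$ carry all the $z_2$-dependence, while the second is a monic BI polynomial in $(-1)^{n_1}z_2-\tfrac14$ whose parameters and argument depend on $n_1$ but \emph{not} on $n_2$. Both recurrences will then be reduced to the univariate three-term recurrence \eqref{Recurrence-Relation}, with these cross-dependencies tracked.

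The 3-term relation is immediate. Since neither $B^{(1)}_{n_1}(z_1)$ nor the parameters and argument of $B^{(2)}_{n_2}(z_2)$ involve $n_2$, I would apply \eqref{Recurrence-Relation} to the second factor in its own variable $(-1)^{n_1}z_2-\tfrac14$—with leading parameter $\rho_1=\beta+\epsilon+\tfrac{n_1}{2}$ and with $A_{n_2},C_{n_2}$ read off from \eqref{Coeff-An} for that factor—and then multiply through by the common factor $B^{(1)}_{n_1}(z_1)$. This reproduces the stated relation verbatim.

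For \eqref{RelRecBI9} I would first fix the skeleton of nonzero terms. A useful preliminary is that for the first factor the combination $g=\beta-\alpha+2\epsilon-\tfrac12$ is $z_2$-independent, so by \eqref{Coeff-An} its coefficients $A_{n_1},C_{n_1}$ are \emph{polynomial} in $z_2$. Applying \eqref{Recurrence-Relation} then writes $z_1B_{n_1,n_2}$ as a sum of three pieces whose first factors are $B^{(1)}_{n_1+1},B^{(1)}_{n_1},B^{(1)}_{n_1-1}$ and whose second-factor parts are polynomials in $z_2$ of degrees $n_2$, $n_2{+}1$ (the centre coefficient $\beta-A_{n_1}-C_{n_1}$ being linear in $z_2$) and $n_2{+}2$ (the backward coefficient $A_{n_1-1}C_{n_1}$ being quadratic). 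Re-expanding each second-factor part in the BI family attached to the index of its own first factor shows that only $m_1\in\{n_1-1,n_1,n_1+1\}$ occur, with $m_2$ at most $n_2$, $n_2{+}1$, $n_2{+}2$ respectively. Independently, multiplication by $z_1$ is self-adjoint with respect to the orthogonality form \eqref{ORbivBI} and raises the total degree by one—and since \eqref{RelRecBI9} is a polynomial identity while \eqref{reducing2to1} recovers Definition~1 from Definition~2 for every $N$, the argument applies to the untruncated polynomials as well—so the standard orthogonal-polynomial argument forces $m_1+m_2\in\{n_1+n_2-1,\,n_1+n_2,\,n_1+n_2+1\}$. Intersecting the two constraints leaves precisely the nine index pairs of \eqref{RelRecBI9}, with $m_2$ running over $\{n_2,n_2{-}1,n_2{-}2\}$, $\{n_2{+}1,n_2,n_2{-}1\}$ and $\{n_2{+}2,n_2{+}1,n_2\}$ in the three rows.

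The remaining task, and the main obstacle, is to compute the coefficients $\theta^{(i)}$. In the $m_1{=}n_1$ row one expands the linear $z_2$-coefficient times $B_{n_1,n_2}$ with the three-term recurrence of the \emph{second} factor—the constant shift $\tfrac14-\alpha^2+\beta^2$ needed to turn $z_1$ into $z_1-\alpha^2+\beta^2$ being absorbed into $\theta^{(5)}$—yielding $\theta^{(4)},\theta^{(5)},\theta^{(6)}$. In the $m_1{=}n_1{\pm}1$ rows the surviving second factor still carries the index-$n_1$ parameters and argument, whereas $B_{n_1\pm1,m_2}$ requires those of index $n_1{\pm}1$; the passage flips $\pi_{n_1}$, shifting $\rho_1,r_1$ by $\pm\tfrac12$, interchanging $\gamma$ and $\delta$, and reflecting the argument through $(-1)^{n_1}z_2\mapsto-(-1)^{n_1}z_2$. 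I would therefore derive the explicit connection coefficients writing the index-$n_1$ second factor in the index-$(n_1{\pm}1)$ family; these, combined with the quadratic coefficient $A_{n_1-1}C_{n_1}$ pushed through the shifted second-factor recurrence in the backward row (the lower-degree terms cancelling as the skeleton already guarantees), produce $\theta^{(1)},\theta^{(2)},\theta^{(3)}$ and $\theta^{(7)},\theta^{(8)},\theta^{(9)}$, hence the appendix formulas. Obtaining these $q=-1$ connection coefficients in closed form is the delicate step; it can be bypassed by invoking Iliev's bispectrality \cite{Iliev}, which supplies the analogous nine-term recurrence for the bivariate Askey-Wilson polynomials \eqref{AWOPs}, whence \eqref{RelRecBI9} follows as its $q\to-1$ limit under \eqref{paramAW}--\eqref{reparamAW}, exactly as the difference equations were obtained above.
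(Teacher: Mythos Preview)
Your treatment of the 3-term relation is exactly the paper's: apply \eqref{Recurrence-Relation} to the second monic factor and multiply through by $B^{(1)}_{n_1}(z_1)$.

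For the 9-term relation your route is different from the paper's and worth noting. The paper does not argue structurally at all: it simply imports Iliev's 9-term recurrence \eqref{RelRecAW9} for the bivariate Askey-Wilson polynomials and pushes it through the $q\to-1$ limit \eqref{paramAW}--\eqref{reparamAW}, with the only real work being the bookkeeping of normalizations---Iliev's relation is stated for the $P_{n_1,n_2}$ of \eqref{normalizedAW}, so one must insert and remove the factors $\mathcal{N}_{n_1,n_2}=\zeta_{n_1,n_2}\xi_{n_1}\xi_{n_2}$ and $\mathcal{M}_{n_1,n_2}=\eta^{(1)}_{n_1}\eta^{(2)}_{n_2}$ before and after the limit, which is how the explicit $\theta^{(i)}$ in the appendix are produced. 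Your proposal instead first explains \emph{why} the relation has the nine-term shape: $g^{(1)}=\beta-\alpha+2\epsilon-\tfrac12$ is $z_2$-free, so the univariate recurrence in $z_1$ yields three pieces whose $z_2$-degrees are $n_2,\,n_2{+}1,\,n_2{+}2$, and the self-adjointness/degree argument then pins down the admissible $(m_1,m_2)$. This is a genuine gain in understanding that the paper does not offer. Where you then need the actual $\theta^{(i)}$ you correctly identify the obstacle (the connection between the index-$n_1$ and index-$(n_1{\pm}1)$ second factors, with the simultaneous reflection of the argument and the $\gamma\leftrightarrow\delta$ swap), and you fall back on Iliev's result---at which point your argument and the paper's coincide. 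One point you pass over that the paper handles explicitly is precisely this normalization conversion in the limit; if you pursue the limit route you should make that step explicit, since $\eta^{(1)}_{n_1}$ depends on $z_2$ and dropping it is what makes the bivariate object a polynomial in the first place.
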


\begin{proof}
The polynomials $P_{n_1,n_2}(x_1,x_2)$ verify \cite{Iliev}
\begin{align} \label{RelRecAW9}
\begin{aligned}
 ca_2 \big[\tfrac{(a\!+\!b)(ab\!+\!q)}{ab(1\!+\!q)}\!-\!z_1\!-\!z_1^{-1}\big]P_{n_1,n_2}(x_1,x_2) =\ &\tau^{(1)}_{n_1,n_2}P_{n_1+1,n_2 \phantom{-1}} + \tau^{(2)}_{n_1,n_2}P_{n_1+1,n_2-1} +\tau^{(3)}_{n_1,n_2}P_{n_1+1,n_2-2} \\
	                                                      +\ &\tau^{(4)}_{n_1,n_2}P_{n_1,n_2+1\phantom{-1}} +   \tau^{(5)}_{n_1,n_2}P_{n_1,n_2\phantom{-1}\phantom{-1}}     +\tau^{(6)}_{n_1,n_2}P_{n_1,n_2-1} \\
	                                                      +\ &\tau^{(7)}_{n_1,n_2}P_{n_1-1,n_2+2} + \tau^{(8)}_{n_1,n_2}P_{n_1-1,n_2+1} +\tau^{(9)}_{n_1,n_2}P_{n_1-1,n_2}.
\end{aligned}
\end{align}
The expression for the coefficients $\tau^{(i)}_{n_1,n_2}$ can be found in the appendix.
This will become a 9-term recurrence relation for the bivariate Bannai-Ito polynomials in the $q\to-1$ limit. The only tricky part is to keep track of all the changes in normalization of the various polynomials in play. Denote by
\begin{align}
 \mathcal{N}_{n_1,n_2} = \zeta_{n_1,n_2} \xi_{n_1}(x_1;a,b,a_2z_2,a_2z_2^{-1}) \xi_{n_2}(x_2;aa_2q^{n_1},ba_2q^{n_1},c,d)
\end{align}
the normalization factors that appear in \eqref{normalizedAW} and by
\begin{align}
 \mathcal{M}_{n_1,n_2} = \eta_{n_1}^{(1)} \eta_{n_2}^{(2)}
\end{align}
the normalization coefficients in the monic BI OPs \eqref{bivBI2} given by \eqref{eta}. Now, the recurrence coefficients are obtained by the following limits :
\begin{align*}
\begin{aligned}
 \theta^{(1)}_{n_1,n_2} &= \frac{\mathcal{M}_{n_1,n_2}}{\mathcal{M}_{n_1+1,n_2\phantom{-1}}} \lim_{q\to-1} \frac{\mathcal{N}_{n_1+1,n_2}}{\mathcal{N}_{n_1,n_2}} \frac{\tau^{(1)}_{n_1,n_2}}{4(1\!+\!q)},     \qquad &&\theta^{(6)}_{n_1,n_2} = \frac{\mathcal{M}_{n_1,n_2}}{\mathcal{M}_{n_1,n_2-1\phantom{-1}}} \lim_{q\to-1} \frac{\mathcal{N}_{n_1,n_2-1}}{\mathcal{N}_{n_1,n_2}} \frac{\tau^{(6)}_{n_1,n_2}}{4(1\!+\!q)}, \\[0.5em]
 \theta^{(2)}_{n_1,n_2} &= \frac{\mathcal{M}_{n_1,n_2}}{\mathcal{M}_{n_1+1,n_2-1}} \lim_{q\to-1} \frac{\mathcal{N}_{n_1+1,n_2-1}}{\mathcal{N}_{n_1,n_2}} \frac{\tau^{(2)}_{n_1,n_2}}{4(1\!+\!q)},             \qquad &&\theta^{(7)}_{n_1,n_2} = \frac{\mathcal{M}_{n_1,n_2}}{\mathcal{M}_{n_1-1,n_2+2}} \lim_{q\to-1} \frac{\mathcal{N}_{n_1-1,n_2+2}}{\mathcal{N}_{n_1,n_2}} \frac{\tau^{(7)}_{n_1,n_2}}{4(1\!+\!q)}, \\[0.5em]
 \theta^{(3)}_{n_1,n_2} &= \frac{\mathcal{M}_{n_1,n_2}}{\mathcal{M}_{n_1+1,n_2-2}} \lim_{q\to-1} \frac{\mathcal{N}_{n_1+1,n_2-2}}{\mathcal{N}_{n_1,n_2}} \frac{\tau^{(3)}_{n_1,n_2}}{4(1\!+\!q)},             \qquad &&\theta^{(8)}_{n_1,n_2} = \frac{\mathcal{M}_{n_1,n_2}}{\mathcal{M}_{n_1-1,n_2+1}} \lim_{q\to-1} \frac{\mathcal{N}_{n_1-1,n_2+1}}{\mathcal{N}_{n_1,n_2}} \frac{\tau^{(8)}_{n_1,n_2}}{4(1\!+\!q)}, \\[0.5em]
 \theta^{(4)}_{n_1,n_2} &= \frac{\mathcal{M}_{n_1,n_2}}{\mathcal{M}_{n_1,n_2+1\phantom{-1}}} \lim_{q\to-1} \frac{\mathcal{N}_{n_1,n_2+1}}{\mathcal{N}_{n_1,n_2}} \frac{\tau^{(4)}_{n_1,n_2}}{4(1\!+\!q)},     \qquad &&\theta^{(9)}_{n_1,n_2} = \frac{\mathcal{M}_{n_1,n_2}}{\mathcal{M}_{n_1-1,n_2\phantom{-1}}} \lim_{q\to-1} \frac{\mathcal{N}_{n_1-1,n_2}}{\mathcal{N}_{n_1,n_2}} \frac{\tau^{(9)}_{n_1,n_2}}{4(1\!+\!q)},  \\[0.5em]
 \theta^{(5)}_{n_1,n_2} &=  \lim_{q\to-1} \frac{\tau^{(5)}_{n_1,n_2}}{4(1\!+\!q)}. 
\end{aligned}
\end{align*}
The limits are assumed to be parametrized by \eqref{paramAW} and \eqref{reparamAW}. The results of these limits can be found in the appendix. 
Moreover, the recurrence relation operator is obtained via 
\begin{align}
 \lim_{q\to-1} \frac{ca_2}{4(1+q)}\big[\tfrac{(a\!+\!b)(ab\!+\!q)}{ab(1\!+\!q)}\!-\!z_1\!-\!z_1^{-1}\big] = z_1-\alpha^2+\beta^2.
\end{align}
Combining all these results, the recurrence relation \eqref{RelRecAW9} reduces to the desired 9-term recurrence relation for the bivariate Bannai-Ito polynomials.

The 3-term recurrence relation simply follows from the the recurrence relation of the univariate Bannai-Ito polynomials \eqref{Recurrence-Relation} applied to the second polynomial of \eqref{bivBI2}.
\end{proof}

These two propositions establish the full multispectrality of the bivariate Bannai-Ito polynomials. Importantly, the recurrence relations prove that the $B_{n_1,n_2}$ are polynomials and not simply rational functions of $z_1$ and $z_2$.

\section{Conclusion}

This paper has enlarged the catalogue of orthogonal polynomials in two variables with the construction of bivariate polynomials of Bannai-Ito type. Their identification and characterization made use of the $q\to-1$ limits of both the bivariate $q$-Racah and Askey-Wilson polynomials of Gasper and Rahman. The first instance led to a truncated version (Definition 1) equipped with a set of positive-definite weights on a two-dimensionnal lattice against which the BI polynomials are orthogonal. The $q\to-1$ limit of the bivariate Askey-Wilson polynomials yielded untruncated Bannai-Ito polynomials in two variables (Definition 2) out of which the finite ones (Definition 1) can be obtained by the choice of parameters \eqref{reducing2to1}. This latter approach allowed for the identification of the difference equations and recurrence relations obeyed by the resulting functions showing in particular that they are indeed polynomials. Let us remark that the finite bivariate Bannai-Ito polynomials that have been found do only make use of the truncation conditions $i)$ and $iii)$ that the univariate polynomials admit. The question of whether there are other bivariate extensions that rely on different reduction mixtures and in particular condition $ii)$ is open and certainly worth exploring.

In another vein, one may wonder if there are natural multivariate generalizations of the Bannai-Ito polynomials along the symmetric function direction. In this respect, the examination of the $q\to-1$ limit of the Koornwinder polynomials of $BC_2$ type could prove illuminating and is envisaged.

We have initiated this exploration of the Bannai-Ito polynomials in many variables within the Tratnik framework because of the expected occurence of extensions of that type in the representation theory of the higher rank Bannai-Ito algebra \cite{HigherRank} as well as in certain superintegrable models that have been constructed \cite{DunklS3,DunklSN}. Let us mention the following to be concrete. A Hamiltonian system on the 3-sphere whose symmetries realize the Bannai-Ito algebra of rank 2 has been constructed in \cite{DunklS3} and various bases of wavefunctions have been explicitly obtained using the Cauchy-Kovalevskaia extension theorem. It is expected that bivariate Bannai-Ito polynomials arise in the interbasis connection coefficients. Do these overlaps coincide with the two-variable polynomials constructed here or do they belong to another extension yet to be found? We plan on looking into this in the near future. Another related question is to determine the algebra underscoring the multispectrality of the two variable BI polynomials we have defined, that is the algebra generated by $L_1, L_2, x_1$ and $x_2$. How does the resulting algebra compare with the rank 2 Bannai-Ito algebra? We hope to report on most of these questions soon.

\section*{Acknowledgments}
JML holds an Alexander-Graham-Bell PhD fellowship from the Natural Science and Engineering Research Council (NSERC) of Canada. LV is grateful to NSERC for support through a discovery grant.

\appendix
\section{Appendix}
To make the article more reader friendly, some cumbersome formulas have been relegated to this appendix. 

The normalization coefficients appearing in the orthogonality relation \eqref{ORbivBI} are given by 
\begin{align*}
  \begin{aligned}
    H_{2n_1,2n_2,2N}       &= \frac{n_1!n_2! (2 p_2)_{n_1} (2 p_3\!+\!\frac{1}{2})_{n_2} (c\!+\!n_1\!+\!2 p_2\!+\!\frac{1}{2})_N (c\!-\!N\!-\!2 p_1\!+\!\frac{3}{2})_{n_1\!+\!n_2}        }{  (N\!-\!n_1\!-\!n_2)!  (c\!+\!n_1\!+\!\frac{1}{2})_{N\!-\!n_1} (c\!+\!n_1\!+\!2 p_2\!+\!\frac{1}{2})_{n_1}^2 (c\!+\!2 n_1\!+\!n_2\!+\!2 p_2\!+\!\frac{1}{2})_{N\!-\!n_1\!-\!n_2} } \\[0.5em]
			  &\quad\times \frac{(c\!+\!N\!+\!n_1\!+\!2 p_2\!+\!2 p_3\!+\!1)_{n_2} (c\!+\!2 n_1\!+\!n_2\!+\!2 p_2\!+\!2 p_3\!+\!1)_{N\!-\!n_1\!-\!n_2}    }{    (c\!+\!2 n_1\!+\!n_2\!+\!2 p_2\!+\!2 p_3\!+\!1)_{n_2}^2 (\!-\!2 N\!-\!2 p_1\!-\!2 p_2\!-\!2 p_3\!+\!\frac{1}{2})_{N\!-\!n_1\!-\!n_2}   }  
  \end{aligned}
\end{align*}
\begin{align*}
  \begin{aligned}
    H_{2n_1\!+\!1,2n_2,2N}     &= \frac{n_1!n_2! (2 p_2)_{n_1\!+\!1} (2 p_3\!+\!\frac{1}{2})_{n_2} (c\!+\!n_1\!+\!2 p_2\!+\!\frac{1}{2})_N (c\!-\!N\!-\!2 p_1\!+\!\frac{3}{2})_{n_1\!+\!n_2}      }{  (N\!-\!n_1\!-\!n_2\!-\!1)!(c\!+\!n_1\!+\!\frac{3}{2})_{N\!-\!n_1\!-\!1} (c\!+\!n_1\!+\!2 p_2\!+\!\frac{1}{2})_{n_1\!+\!1}^2  (c\!+\!2 n_1\!+\!n_2\!+\!2 p_2\!+\!\frac{3}{2})_{N\!-\!n_1\!-\!n_2\!-\!1}   } \\[0.5em]
			      &\quad\times \frac{  (c\!+\!N\!+\!n_1\!+\!2 p_2\!+\!2 p_3\!+\!2)_{n_2} (c\!+\!2 n_1\!+\!n_2\!+\!2 p_2\!+\!2 p_3\!+\!2)_{N\!-\!n_1\!-\!n_2}    }{   (c\!+\!2 n_1\!+\!n_2\!+\!2 p_2\!+\!2 p_3\!+\!2)_{n_2}^2 (\!-\!2 N\!-\!2 p_1\!-\!2 p_2\!-\!2 p_3\!+\!\frac{1}{2})_{N\!-\!n_1\!-\!n_2}   }  
  \end{aligned}
\end{align*}
\begin{align*}
  \begin{aligned}
    H_{2n_1,2n_2\!+\!1,2N}    &= \frac{n_1!n_2! (2 p_2)_{n_1} (2 p_3\!+\!\frac{1}{2})_{n_2\!+\!1} (c\!+\!n_1\!+\!2 p_2\!+\!\frac{1}{2})_N (c\!-\!N\!-\!2 p_1\!+\!\frac{3}{2})_{n_1\!+\!n_2}    }{  (N\!-\!n_1\!-\!n_2\!-\!1)!(c\!+\!n_1\!+\!\frac{1}{2})_{N\!-\!n_1} (c\!+\!n_1\!+\!2 p_2\!+\!\frac{1}{2})_{n_1}^2  (c\!+\!2 n_1\!+\!n_2\!+\!2 p_2\!+\!\frac{3}{2})_{N\!-\!n_1\!-\!n_2\!-\!1} } \\[0.5em]
			      &\quad\times \frac{  (c\!+\!N\!+\!n_1\!+\!2 p_2\!+\!2 p_3\!+\!1)_{n_2\!+\!1} (c\!+\!2 n_1\!+\!n_2\!+\!2 p_2\!+\!2 p_3\!+\!1)_{N\!-\!n_1\!-\!n_2}    }{  (c\!+\!2 n_1\!+\!n_2\!+\!2 p_2\!+\!2 p_3\!+\!1)_{n_2\!+\!1}^2 (\!-\!2 N\!-\!2 p_1\!-\!2 p_2\!-\!2 p_3\!+\!\frac{1}{2})_{N\!-\!n_1\!-\!n_2}   }  
  \end{aligned}
\end{align*}
\begin{align*}
  \begin{aligned}
    H_{2n_1\!+\!1,2n_2\!+\!1,2N}   &= \frac{n_1!n_2! (2 p_2)_{n_1\!+\!1} (2 p_3\!+\!\frac{1}{2})_{n_2\!+\!1} (c\!+\!n_1\!+\!2 p_2\!+\!\frac{1}{2})_N (c\!-\!N\!-\!2 p_1\!+\!\frac{3}{2})_{n_1\!+\!n_2\!+\!1}  }{  (N\!-\!n_1\!-\!n_2\!-\!1)!(c\!+\!n_1\!+\!\frac{3}{2})_{N\!-\!n_1\!-\!1} (c\!+\!n_1\!+\!2 p_2\!+\!\frac{1}{2})_{n_1\!+\!1}^2  (c\!+\!2 n_1\!+\!n_2\!+\!2 p_2\!+\!\frac{5}{2})_{N\!-\!n_1\!-\!n_2\!-\!2} } \\[0.5em]
				  &\quad\times \frac{   (c\!+\!N\!+\!n_1\!+\!2 p_2\!+\!2 p_3\!+\!2)_{n_2} (c\!+\!2 n_1\!+\!n_2\!+\!2 p_2\!+\!2 p_3\!+\!2)_{N\!-\!n_1\!-\!n_2}   }{  (c\!+\!2 n_1\!+\!n_2\!+\!2 p_2\!+\!2 p_3\!+\!2)_{n_2\!+\!1}^2 (\!-\!2 N\!-\!2 p_1\!-\!2 p_2\!-\!2 p_3\!+\!\frac{1}{2})_{N\!-\!n_1\!-\!n_2\!-\!1}   }  
  \end{aligned}
\end{align*}
\begin{align*}
  \begin{aligned}
  H_{2n_1,2n_2,2N\!+\!1}     &= \frac{n_1!n_2! (2 p_2)_{n_1} (2 p_3\!+\!\frac{1}{2})_{n_2} (c\!+\!n_1\!+\!2 p_2\!+\!\frac{1}{2})_N (c\!-\!N\!-\!2 p_1\!+\!\frac{1}{2})_{n_1\!+\!n_2}      }{  (N\!-\!n_1\!-\!n_2)!  (c\!+\!n_1\!+\!\frac{1}{2})_{N\!-\!n_1\!+\!1} (c\!+\!n_1\!+\!2 p_2\!+\!\frac{1}{2})_{n_1}^2  (c\!+\!2 n_1\!+\!n_2\!+\!2 p_2\!+\!\frac{1}{2})_{N\!-\!n_1\!-\!n_2} } \\[0.5em]
				  &\quad\times \frac{  (c\!+\!N\!+\!n_1\!+\!2 p_2\!+\!2 p_3\!+\!2)_{n_2} (c\!+\!2 n_1\!+\!n_2\!+\!2 p_2\!+\!2 p_3\!+\!1)_{N\!-\!n_1\!-\!n_2\!+\!1}   }{   (c\!+\!2 n_1\!+\!n_2\!+\!2 p_2\!+\!2 p_3\!+\!1)_{n_2}^2 (\!-\!2 N\!-\!2 p_1\!-\!2 p_2\!-\!2 p_3\!-\!\frac{1}{2})_{N\!-\!n_1\!-\!n_2\!+\!1}   }  
  \end{aligned}
\end{align*}
\begin{align*}
  \begin{aligned}
  H_{2n_1\!+\!1,2n_2,2N\!+\!1}   &= \frac{n_1!n_2! (2 p_2)_{n_1\!+\!1} (2 p_3\!+\!\frac{1}{2})_{n_2} (c\!+\!n_1\!+\!2 p_2\!+\!\frac{1}{2})_N (c\!-\!N\!-\!2 p_1\!+\!\frac{1}{2})_{n_1\!+\!n_2\!+\!1}    }{  (N\!-\!n_1\!-\!n_2)!  (c\!+\!n_1\!+\!\frac{3}{2})_{N\!-\!n_1} (c\!+\!n_1\!+\!2 p_2\!+\!\frac{1}{2})_{n_1\!+\!1}^2  (c\!+\!2 n_1\!+\!n_2\!+\!2 p_2\!+\!\frac{3}{2})_{N\!-\!n_1\!-\!n_2\!-\!1} } \\[0.5em]
				  &\quad\times \frac{  (c\!+\!N\!+\!n_1\!+\!2 p_2\!+\!2 p_3\!+\!2)_{n_2} (c\!+\!2 n_1\!+\!n_2\!+\!2 p_2\!+\!2 p_3\!+\!2)_{N\!-\!n_1\!-\!n_2}   }{   (c\!+\!2 n_1\!+\!n_2\!+\!2 p_2\!+\!2 p_3\!+\!2)_{n_2}^2 (\!-\!2 N\!-\!2 p_1\!-\!2 p_2\!-\!2 p_3\!-\!\frac{1}{2})_{N\!-\!n_1\!-\!n_2}   }  
  \end{aligned}
\end{align*}
\begin{align*}
  \begin{aligned}
  H_{2n_1,2n_2\!+\!1,2N\!+\!1}   &= \frac{n_1!n_2! (2 p_2)_{n_1} (2 p_3\!+\!\frac{1}{2})_{n_2\!+\!1} (c\!+\!n_1\!+\!2 p_2\!+\!\frac{1}{2})_N (c\!-\!N\!-\!2 p_1\!+\!\frac{1}{2})_{n_1\!+\!n_2\!+\!1}   }{  (N\!-\!n_1\!-\!n_2)!  (c\!+\!n_1\!+\!\frac{1}{2})_{N\!-\!n_1\!+\!1} (c\!+\!n_1\!+\!2 p_2\!+\!\frac{1}{2})_{n_1}^2  (c\!+\!2 n_1\!+\!n_2\!+\!2 p_2\!+\!\frac{3}{2})_{N\!-\!n_1\!-\!n_2\!-\!1} } \\[0.5em]
				    &\quad\times \frac{  (c\!+\!N\!+\!n_1\!+\!2 p_2\!+\!2 p_3\!+\!1)_{n_2\!+\!1} (c\!+\!2 n_1\!+\!n_2\!+\!2 p_2\!+\!2 p_3\!+\!1)_{N\!-\!n_1\!-\!n_2}  }{  (c\!+\!2 n_1\!+\!n_2\!+\!2 p_2\!+\!2 p_3\!+\!1)_{n_2\!+\!1}^2 (\!-\!2 N\!-\!2 p_1\!-\!2 p_2\!-\!2 p_3\!-\!\frac{1}{2})_{N\!-\!n_1\!-\!n_2}  }  
  \end{aligned}
\end{align*}
\begin{align*}
  \begin{aligned}
  H_{2n_1\!+\!1,2n_2\!+\!1,2N\!+\!1} &= \frac{n_1!n_2! (2 p_2)_{n_1\!+\!1} (2 p_3\!+\!\frac{1}{2})_{n_2\!+\!1} (c\!+\!n_1\!+\!2 p_2\!+\!\frac{1}{2})_N (c\!-\!N\!-\!2 p_1\!+\!\frac{1}{2})_{n_1\!+\!n_2\!+\!1}     }{ (N\!-\!n_1\!-\!n_2\!-\!1)!(c\!+\!n_1\!+\!\frac{3}{2})_{N\!-\!n_1} (c\!+\!n_1\!+\!2 p_2\!+\!\frac{1}{2})_{n_1\!+\!1}^2  (c\!+\!2 n_1\!+\!n_2\!+\!2 p_2\!+\!\frac{5}{2})_{N\!-\!n_1\!-\!n_2\!-\!2} }  \\[0.5em]
				  &\quad\times \frac{  (c\!+\!N\!+\!n_1\!+\!2 p_2\!+\!2 p_3\!+\!2)_{n_2\!+\!1} (c\!+\!2 n_1\!+\!n_2\!+\!2 p_2\!+\!2 p_3\!+\!2)_{N\!-\!n_1\!-\!n_2}  }{   (c\!+\!2 n_1\!+\!n_2\!+\!2 p_2\!+\!2 p_3\!+\!2)_{n_2\!+\!1}^2 (\!-\!2 N\!-\!2 p_1\!-\!2 p_2\!-\!2 p_3\!-\!\frac{1}{2})_{N\!-\!n_1\!-\!n_2}   }  
  \end{aligned}
\end{align*}

The coefficients of the differential operator $\mathcal{L}$ \eqref{mathcalL} of section 3 read
\begin{align*} \label{CoefC}
\begin{aligned}
 C_{-1,-1} &= -\frac{(z_1-a) (z_1-b) (z_2-c) (z_2-d) (z_1 z_2-a_2) (z_1 z_2-a_2 q)}{\left(z_1^2-1\right) \left(z_2^2-1\right) \left(z_1^2-q\right) \left(q-z_2^2\right)} \\[0.5em]
 C_{-1, 0} &= \frac{z_2 q (q+1) \left(z_1-a\right) \left(z_1-b\right) \left(z_1z_2-a_2\right) \left(z_1-a_2 z_2\right) \left(1+\frac{c d}{q}-\frac{(z_2^2+1)(c+d)}{(q+1) z_2}\right)}{\left(z_1^2-1\right) \left(z_1^2-q\right) \left(q-z_2^2\right) \left(q z_2^2-1\right)} \\[0.5em]
 C_{-1, 1} &= \frac{(a-z_1) (z_1-b) (c z_2-1) (1-d z_2) (z_1-a_2 z_2) (z_1-a_2 q z_2)}{\left(z_1^2-1\right) \left(z_2^2-1\right) \left(z_1^2-q\right) \left(q z_2^2-1\right)} \\[0.5em]
 C_{ 0,-1} &= \frac{z_1 q (q+1) \left(z_2-c\right) \left(z_2-d\right) \left(z_1z_2-a_2\right) \left(z_2-a_2 z_1\right) \left(1+\frac{a b}{q}-\frac{(z_1^2+1)(a+b)}{z_1(q+1)}\right)}{\left(z_2^2-1\right) \left(z_1^2-q\right) \left(1-q z_1^2\right) \left(z_2^2-q\right)} \\[0.5em]
 C_{ 0, 0} &= -1+\frac{a_2 (a+b) (c+d)}{q+1} -\frac{a a_2^2 b c d}{q}  \\ &\quad             +\frac{z_1^2 z_2^2 q^2 (q+1)^2 \left(1+\frac{ab}{q}-\frac{(z_1^2+1)(a+b)}{z_1(q+1)}\right) \left(1+\frac{a_2^2}{q}-\frac{a_2 (z_1^2+1) (z_2^2+1)}{z_1z_2(q+1)}\right) \left(1+\frac{c d}{q}-\frac{(z_2^2+1)(c+d)}{z_2(q+1)}\right)   }{    \left(z_1^2-q\right) \left(1-q z_1^2\right) \left(z_2^2-q\right) \left(1-q z_2^2\right)}   \\[0.5em]
 C_{ 0, 1} &= \frac{z_1 q (q+1) (1-c z_2) (1-d z_2) (z_1-a_2 z_2) (1-a_2 z_1 z_2) \left(1+\frac{a b}{q}-\frac{(z_1^2+1)(a+b)}{z_1(q+1)}\right)}{\left(1-z_2^2\right) \left(z_1^2-q\right) \left(1-q z_1^2\right) \left(1-q z_2^2\right)} \\[0.5em]
 C_{ 1,-1} &= \frac{(a z_1-1) (b z_1-1) (c-z_2) (z_2-d) (a_2 z_1-z_2) (a_2 q z_1-z_2)}{\left(z_1^2-1\right) \left(z_2^2-1\right) \left(q z_1^2-1\right) \left(q-z_2^2\right)} \\[0.5em]
 C_{ 1, 0} &= \frac{z_2 q (q+1) (1-a z_1) (1-b z_1) (z_2-a_2 z_1) (1-a_2 z_1 z_2) \left(1+\frac{c d}{q}-\frac{(z_2^2+1) (c+d)}{(q+1) z_2}\right)}{\left(z_1^2-1\right) \left(q z_1^2-1\right) \left(q-z_2^2\right) \left(q z_2^2-1\right)} \\[0.5em]
 C_{ 1, 1} &= -\frac{(a z_1-1) (b z_1-1) (c z_2-1) (1-d z_2) (a_2 z_1 z_2-1) (a_2 q z_1 z_2-1)}{\left(z_1^2-1\right) \left(z_2^2-1\right) \left(q z_1^2-1\right) \left(q z_2^2-1\right)}
\end{aligned}
\end{align*}

The recurrence coefficients for the Askey-Wilson polynomials appearing in \eqref{RelRecAW9} have the following expressions 
\begin{align*}
  \tau^{(1)}_{n_1,n_2} &= -\frac{\left(a_2^2 q^{n_1}\!-\!1\right) \left(a a_2^2 b q^{n_1}\!-\!q\right) \left(a a_2 c q^{n_1\!+n_2}\!-\!1\right) \left(a_2 b c q^{n_1\!+n_2}\!-\!1\right) \left(a a_2^2 b c d q^{2 n_1\!+n_2}\!-\!1\right) \left(a a_2^2 b c d q^{2 n_1\!+n_2}\!-\!q\right)}{\left(a a_2^2 b q^{2 n_1}\!-\!1\right) \left(a a_2^2 b q^{2 n_1}\!-\!q\right) \left(a a_2^2 b c d q^{2 (n_1\!+n_2)}\!-\!1\right) \left(a a_2^2 b c d q^{2 (n_1\!+n_2)}\!-\!q\right)} \\[0.5em]
  \tau^{(2)}_{n_1,n_2} &= \frac{a_2 c q^{n_1} \left(q^{n_2}\!-\!1\right) \left(a_2^2 q^{n_1}\!-\!1\right) \left(a a_2^2 b q^{n_1}\!-\!q\right) \left(a a_2^2 b c d q^{2 n_1\!+n_2}\!-\!q\right) \big((a\!+\!b)(q\!+\!a a_2^2 b c d q^{2 (n_1\!+n_2)})\!-\!a a_2 b (q\!+\!1) (c\!+\!d) q^{n_1\!+n_2}\big)  }{   \left(q\!-\!a a_2^2 b q^{2 n_1}\right) \left(a a_2^2 b q^{2 n_1}\!-\!1\right) \left(q^2\!-\!a a_2^2 b c d q^{2 (n_1\!+n_2)}\right) \left(a a_2^2 b c d q^{2 (n_1\!+n_2)}\!-\!1\right)}  \\[0.5em]
  \tau^{(3)}_{n_1,n_2} &= -\frac{a a_2^2 b c^2 q^{2 n_1} \left(q^{n_2}\!-\!1\right) \left(q^{n_2}\!-\!q\right) \left(a_2^2 q^{n_1}\!-\!1\right) \left(a a_2^2 b q^{n_1}\!-\!q\right) \left(a a_2 d q^{n_1\!+n_2}\!-\!q\right) \left(a_2 b d q^{n_1\!+n_2}\!-\!q\right)}{\left(a a_2^2 b q^{2 n_1}\!-\!1\right) \left(a a_2^2 b q^{2 n_1}\!-\!q\right) \left(a a_2^2 b c d q^{2 (n_1\!+n_2)}\!-\!q\right) \left(a a_2^2 b c d q^{2 (n_1\!+n_2)}\!-\!q^2\right)} \\[0.5em]
  \tau^{(4)}_{n_1,n_2} &= -\frac{q (q\!+\!1) \left(1\!-\!c d q^{n_2}\right) \left(1\!-\!a a_2 c q^{n_1\!+n_2}\right) \left(1\!-\!a_2 b c q^{n_1\!+n_2}\right) \left(1\!-\!a a_2^2 b c d q^{2 n_1\!+n_2\!-\!1}\right) \left(1\!+\!\frac{a_2^2}{q}\!-\!\frac{(a^2b+aq) (a^2 a_2^2b q^{2 n_1}+aq)}{a^3b q^{n_1\!+1} (q+1)}\right)    }{\left(1\!-\!\frac{q^{2\!-\!2 n_1}}{a a_2^2 b}\right) \left(1\!-\!a a_2^2 b q^{2 n_1}\right) \left(1\!-\!a a_2^2 b c d q^{2 (n_1\!+n_2)}\right) \left(1\!-\!a a_2^2 b c d q^{2 n_1\!+\!2 n_2\!-\!1}\right)} \\[0.5em]
  \tau^{(5)}_{n_1,n_2} &= -\frac{q^2 (q\!+\!1)^2 \left(1\!+\!\frac{a_2^2}{q}\!-\!\frac{ (ab+q)(q+a a_2^2 b q^{2n_1} )}{a b q^{n_1\!+1} (q+1)}\right) \left(1\!+\!\frac{c}{d}\!-\!\frac{ (a+b)(q+a a_2^2 bcdq^{2(\!n_1\!+n_2\!)}) }{aa_2bd q^{n_1\!+n_2} (q+1)}\right)     \left(1\!+\!\frac{c d}{q}\!-\!\frac{ (q+aa_2^2bq^{2n_1})(q+aa_2^2bcdq^{2(n_1\!+n_2)})   }{ a a_2^2 b q^{1+\!2n_1\!+n_2}(q+1)}\right)      }{        \left(1\!-\!\frac{q^{2\!-\!2 n_1}}{a a_2^2 b}\right) \left(1\!-\!a a_2^2 b q^{2 n_1}\right) \left(1\!-\!\frac{q^{\!-\!2 (n_1\!+n_2\!-\!1)}}{a a_2^2 b c d}\right) \left(1\!-\!a a_2^2 b c d q^{2 (n_1\!+n_2)}\right)} \\[0.5em]
  \tau^{(6)}_{n_1,n_2} &= \frac{a_2^2 c^2 q^{n_1}\! \left(q^{n_2}\!-\!1\right) (q\!-\!a a_2^2 b q^{2 n_1\!+n_2}) \left(q\!-\!a a_2 d q^{n_1\!+n_2}\right) \left(q\!-\!a_2 b d q^{n_1\!+n_2}\right) (abq^{n_1}(1\!+\!q)(a_2^2\!+\!q)\!-\!(ab\!+\!q)(q\!+\!aa_2^2bq^{2n_1}))    }{   \left(q^2\!-\!a a_2^2 b q^{2 n_1}\right) \left(a a_2^2 b q^{2 n_1}\!-\!1\right) \left(q\!-\!a a_2^2 b c d q^{2 (n_1\!+n_2)}\right) \left(q^2\!-\!a a_2^2 b c d q^{2 (n_1\!+n_2)}\right)}  \\[0.5em]
  \tau^{(7)}_{n_1,n_2} &= \frac{a a_2^4 b q^{2 n_1\!+\!1} \left(1\!-\!q^{n_1}\right) \left(q\!-\!a b q^{n_1}\right) \left(1\!-\!c d q^{n_2}\right) \left(1\!-\!c d q^{n_2\!+\!1}\right) \left(1\!-\!a a_2 c q^{n_1\!+n_2}\right) \left(1\!-\!a_2 b c q^{n_1\!+n_2}\right)}{\left(q\!-\!a a_2^2 b q^{2 n_1}\right) \left(q^2\!-\!a a_2^2 b q^{2 n_1}\right) \left(q\!-\!a a_2^2 b c d q^{2 (n_1\!+n_2)}\right) \left(a a_2^2 b c d q^{2 (n_1\!+n_2)}\!-\!1\right)} \\[0.5em]
  \tau^{(8)}_{n_1,n_2} &= \frac{a_2^3 c q^{n_1} \left(q^{n_1}\!-\!1\right) \left(q\!-\!a b q^{n_1}\right) \left(c d q^{n_2}\!-\!1\right) \left(q\!-\!a a_2^2 b q^{2 n_1\!+n_2}\right) \left((a+b)(q+aa_2^2bcdq^{2 (n_1\!+n_2)})-aa_2b(c\!+\!d)(1\!+\!q)q^{n_1\!+n_2}\right)      }{    \left(q\!-\!a a_2^2 b q^{2 n_1}\right) \left(q^2\!-\!a a_2^2 b q^{2 n_1}\right) \left(q^2\!-\!a a_2^2 b c d q^{2 (n_1\!+n_2)}\right) \left(1\!-\!a a_2^2 b c d q^{2 (n_1\!+n_2)}\right)} \\[0.5em]
  \tau^{(9)}_{n_1,n_2} &= -\frac{a_2^2 c^2 \left(q^{n_1}\!-\!1\right) \left(a b q^{n_1}\!-\!q\right) \left(a a_2^2 b q^{2 n_1\!+\!n_2}\!-\!q\right) \left(a a_2^2 b q^{2 n_1\!+\!n_2}\!-\!q^2\right) \left(a a_2 d q^{n_1\!+\!n_2}\!-\!q\right) \left(a_2 b d q^{n_1\!+\!n_2}\!-\!q\right)}{\left(q\!-\!a a_2^2 b q^{2 n_1}\right) \left(q^2\!-\!a a_2^2 b q^{2 n_1}\right) \left(q\!-\!a a_2^2 b c d q^{2 (n_1\!+\!n_2)}\right) \left(q^2\!-\!a a_2^2 b c d q^{2 (n_1\!+\!n_2)}\right)}
\end{align*}

The coefficients for the 9-term recurrence relation \eqref{RelRecBI9} satisfied by the bivariate Bannai-Ito polynomials are
\begin{align*}
 \theta^{(1)}_{n_1,n_2} = (-1)^{n_2} 
\end{align*}
\begin{align*}
 \theta^{(2)}_{n_1,n_2} = \begin{cases}
                           \frac{n_2}{4} \left(\frac{2 \beta +2 \gamma +2 \epsilon+n_1+n_2+1}{-\alpha +\beta +\gamma +\delta +2 \epsilon+n_1+n_2+1}-\frac{-2 \alpha +2 \gamma +2 \epsilon+n_1+n_2}{-\alpha +\beta +\gamma +\delta +2 \epsilon+n_1+n_2}\right) \quad&\text{$n_1$ even, $n_2$ even,} \\[0.5em]     
                           \frac{1}{4} (2 \gamma +2 \delta +n_2) \left(\frac{-2 \alpha +2 \gamma +2 \epsilon+n_1+n_2+1}{-\alpha +\beta +\gamma +\delta +2 \epsilon+n_1+n_2+1}-\frac{2 \beta +2 \gamma +2 \epsilon+n_1+n_2}{-\alpha +\beta +\gamma +\delta +2 \epsilon+n_1+n_2}\right) \quad&\text{$n_1$ even, $n_2$ odd,} \\[0.5em]  
			   \frac{ n_2}{4} \left(\frac{2 \beta +2 \gamma +2 \epsilon+n_1+n_2}{-\alpha +\beta +\gamma +\delta +2 \epsilon+n_1+n_2}-\frac{-2 \alpha +2 \gamma +2 \epsilon+n_1+n_2+1}{-\alpha +\beta +\gamma +\delta +2 \epsilon+n_1+n_2+1}\right) \quad&\text{$n_1$ odd, $n_2$ even,} \\[0.5em]  
			   \frac{1}{4} (2 \gamma +2 \delta +n_2) \left(\frac{-2 \alpha +2 \gamma +2 \epsilon+n_1+n_2}{-\alpha +\beta +\gamma +\delta +2 \epsilon+n_1+n_2}-\frac{2 \beta +2 \gamma +2 \epsilon+n_1+n_2+1}{-\alpha +\beta +\gamma +\delta +2 \epsilon+n_1+n_2+1}\right) \quad&\text{$n_1$ odd, $n_2$ odd,}  
\end{cases}
\end{align*}
\begin{align*}
 \theta^{(3)}_{n_1,n_2} = \begin{cases}
                           -\frac{n_2 (2 \gamma +2 \delta +n_2-1) (-2 \alpha +2 \gamma +2 \epsilon+n_1+n_2) (2 \beta +2 \delta +2 \epsilon+n_1+n_2)}{16 (-\alpha +\beta +\gamma +\delta +2 \epsilon+n_1+n_2)^2} \quad&\text{$n_1$ even, $n_2$ even,} \\[0.5em]     
                \frac{(n_2-1) (2 \gamma +2 \delta +n_2) (-2 \alpha +2 \delta +2 \epsilon+n_1+n_2) (2 \beta +2 \gamma +2 \epsilon+n_1+n_2)}{16 (-\alpha +\beta +\gamma +\delta +2 \epsilon+n_1+n_2)^2} \quad&\text{$n_1$ even, $n_2$ odd,} \\[0.5em]  
			   -\frac{n_2 (2 \gamma +2 \delta +n_2-1) (-2 \alpha +2 \delta +2 \epsilon+n_1+n_2) (2 \beta +2 \gamma +2 \epsilon+n_1+n_2)}{16 (-\alpha +\beta +\gamma +\delta +2 \epsilon+n_1+n_2)^2} \quad&\text{$n_1$ odd, $n_2$ even,} \\[0.5em]  
			   \frac{(n_2-1) (2 \gamma +2 \delta +n_2) (-2 \alpha +2 \gamma +2 \epsilon+n_1+n_2) (2 \beta +2 \delta +2 \epsilon+n_1+n_2)}{16 (-\alpha +\beta +\gamma +\delta +2 \epsilon+n_1+n_2)^2} \quad&\text{$n_1$ odd, $n_2$ odd,}  
\end{cases}
\end{align*}
\begin{align*}
 \theta^{(4)}_{n_1,n_2} = \begin{cases}
                          1-\frac{2 \epsilon+\frac{n_1}{2}}{-\alpha +\beta +2 \epsilon+n_1+\frac{1}{2}}-\frac{\frac{n_1}{2}}{ -\alpha +\beta +2 \epsilon+n_1-\frac{1}{2}} \quad&\text{$n_1$ even,} \\[0.5em]  
			  1-\frac{2 \epsilon+\frac{n_1-1}{2}}{-\alpha +\beta +2 \epsilon+n_1-\frac{1}{2}}-\frac{\frac{n_1+1}{2}}{ -\alpha +\beta +2 \epsilon+n_1+\frac{1}{2}} \quad&\text{$n_1$ odd,}  
\end{cases}
\end{align*}
\begin{align*}
 \theta^{(5)}_{n_1,n_2} = \begin{cases}
                          &-\frac{1}{4}\left(\frac{n_1}{-2 \alpha +2 \beta +4 \epsilon+2 n_1-1}  +\frac{4 \epsilon+n_1}{-2 \alpha +2 \beta +4 \epsilon+2 n_1+1}-1\right) \left(2 \alpha   \!+\!2 \beta \!-\frac{2 \beta +2 \gamma +2 \epsilon+n_1+n_2+1}{-\alpha +\beta +\gamma +\delta +2 \epsilon+n_1+n_2+1}+\!1\right)     \\[0.5em]  &\hspace{0cm} \times\phantom{\frac14} \left(-2 \alpha \!+\!2 \beta \!-\frac{n_2}{-\alpha +\beta +\gamma +\delta +2 \epsilon+n_1+n_2}+\!4\epsilon\!+\!2 n_1\!+\!1\right)     \hfill \text{$n_1$ even, $n_2$ even,} \\[0.8em]     
                          &-\frac{1}{4}\left(\frac{n_1}{-2 \alpha +2 \beta +4 \epsilon+2 n_1-1}  +\frac{4 \epsilon+n_1}{-2 \alpha +2 \beta +4 \epsilon+2 n_1+1}-1\right) \left(2 \alpha   \!+\!2 \beta \!+\frac{2 (\beta +\gamma )+2 \epsilon+n_1+n_2}{-\alpha +\beta +\gamma +\delta +2 \epsilon+n_1+n_2}-\!1\right)         \\[0.5em]  &\hspace{0cm} \times\phantom{\frac14} \left(-2 \alpha \!+\!2 \beta \!+\frac{n_2+1}{-\alpha +\beta +\gamma +\delta +2 \epsilon+n_1+n_2+1}+\!4\epsilon\!+\!2 n_1\!-\!1\right) \hfill \text{$n_1$ even, $n_2$ odd,} \\[0.8em]  
			  &-\frac{1}{4}\left(\frac{n_1+1}{-2 \alpha +2 \beta +4 \epsilon+2 n_1+1}+\frac{4 \epsilon+n_1-1}{-2 \alpha +2 \beta +4 \epsilon+2 n_1-1}-1\right) \left(2 \alpha \!+\!2 \beta \!+\frac{-2 \alpha +2 \gamma +2 \epsilon+n_1+n_2+1}{-\alpha +\beta +\gamma +\delta +2 \epsilon+n_1+n_2+1}-\!1\right) \\[0.5em]  &\hspace{0cm} \times\phantom{\frac14} \left(-2 \alpha \!+\!2 \beta \!-\frac{n_2}{-\alpha +\beta +\gamma +\delta +2 \epsilon+n_1+n_2}+\!4\epsilon\!+\!2 n_1\!+\!1\right)       \hfill \text{$n_1$ odd, $n_2$ even,} \\[0.8em]  
			  &-\frac{1}{4}\left(\frac{n_1+1}{-2 \alpha +2 \beta +4 \epsilon+2 n_1+1}+\frac{4 \epsilon+n_1-1}{-2 \alpha +2 \beta +4 \epsilon+2 n_1-1}-1\right) \left(2 \alpha \!+\!2 \beta \!-\frac{-2 \alpha +2 \gamma +2 \epsilon+n_1+n_2}{-\alpha +\beta +\gamma +\delta +2 \epsilon+n_1+n_2}+\!1\right)     \\[0.5em]  &\hspace{0cm} \times\phantom{\frac14} \left(-2 \alpha \!+\!2 \beta \!+\frac{n_2+1}{-\alpha +\beta +\gamma +\delta +2 \epsilon+n_1+n_2+1}+\!4\epsilon\!+\!2 n_1\!-\!1\right)   \hfill \text{$n_1$ odd, $n_2$ odd,}  
\end{cases}
\end{align*}
\begin{align*}
 \theta^{(6)}_{n_1,n_2} = \begin{cases}
                          &\frac{n_2 \left(2 \alpha -2 \beta +\frac{2 n_1}{-2 \alpha +2 \beta +4 \epsilon+2 n_1-1}-1\right) \left(-\alpha +\gamma +\epsilon+\frac{n_1+n_2}{2}\right) \left(\beta +\delta +\epsilon+\frac{n_1+n_2}{2}\right) \left(-\alpha +\beta +\gamma +\delta +2 \epsilon+n_1+\frac{n_2}{2}\right)}{2 (-2 \alpha +2 \beta +4 \epsilon+2 n_1+1) (-\alpha +\beta +\gamma +\delta +2 \epsilon+n_1+n_2)^2} \\ &\hfill \text{$n_1$ even, $n_2$ even,} \\[0.5em]     
                          &\frac{ \left(\gamma +\delta +\frac{n_2}{2}\right)\left(2 \alpha -2 \beta +\frac{2 n_1}{-2 \alpha +2 \beta +4 \epsilon+2 n_1-1}-1\right) \left(-\alpha +\beta +2 \epsilon+n_1+\frac{n_2}{2}\right) \left(-\alpha +\delta +\epsilon+\frac{n_1+n_2}{2}\right) \left(\beta +\gamma +\epsilon+\frac{n_1+n_2}{2}\right)}{(-2 \alpha +2 \beta +4 \epsilon+2 n_1+1) (-\alpha +\beta +\gamma +\delta +2 \epsilon+n_1+n_2)^2} \\ &\hfill \text{$n_1$ even, $n_2$ odd,} \\[0.5em]  
			  &\frac{n_2 \left(2 \alpha -2 \beta +\frac{8 \epsilon+2 n_1-2}{-2 \alpha +2 \beta +4 \epsilon+2 n_1-1}-1\right) \left(-\alpha +\delta +\epsilon+\frac{n_1+n_2}{2}\right) \left(\beta +\gamma +\epsilon+\frac{n_1+n_2}{2}\right) \left(-\alpha +\beta +\gamma +\delta +2 \epsilon+n_1+\frac{n_2}{2}\right)}{2 (-2 \alpha +2 \beta +4 \epsilon+2 n_1+1) (-\alpha +\beta +\gamma +\delta +2 \epsilon+n_1+n_2)^2} \\ &\hfill \text{$n_1$ odd, $n_2$ even,} \\[0.5em]  
			  &\frac{ \left(\gamma +\delta +\frac{n_2}{2}\right)\left(2 \alpha -2 \beta +\frac{8 \epsilon+2 n_1-2}{-2 \alpha +2 \beta +4 \epsilon+2 n_1-1}-1\right)  \left(-\alpha +\beta +2 \epsilon+n_1+\frac{n_2}{2}\right) \left(-\alpha +\gamma +\epsilon+\frac{n_1+n_2}{2}\right) \left(\beta +\delta +\epsilon+\frac{n_1+n_2}{2}\right)}{(-2 \alpha +2 \beta +4 \epsilon+2 n_1+1) (-\alpha +\beta +\gamma +\delta +2 \epsilon+n_1+n_2)^2} \\ &\hfill \text{$n_1$ odd, $n_2$ odd,}  
\end{cases}
\end{align*}
\begin{align*}
 \theta^{(7)}_{n_1,n_2} = \begin{cases}
                         (-1)^{n_2}\frac{ n_1 (-2 \alpha +2 \beta +4 \epsilon+n_1-1)}{(-2 \alpha +2 \beta +4 \epsilon+2 n_1-1)^2} \quad&\text{$n_1$ even,} \\[0.5em]  
			  (-1)^{n_2}\frac{(4 \epsilon+n_1-1) (-2 \alpha +2 \beta +n_1)}{(-2 \alpha +2 \beta +4 \epsilon+2 n_1-1)^2} \quad&\text{$n_1$ odd,}  
\end{cases}
\end{align*}
\begin{align*}
 \theta^{(8)}_{n_1,n_2} = \begin{cases}
                          -\frac{ \frac{n_1}{2} (-2 \alpha +2 \beta +4 \epsilon+n_1-1) \left(-\alpha +\beta +\gamma +\delta +2 \epsilon+n_1+\frac{n_2}{2}\right) \left(2 \alpha +2 \beta -\frac{-2\alpha +2\gamma +2\epsilon+n_1+n_2}{-\alpha +\beta +\gamma +\delta +2 \epsilon+n_1+n_2}+1\right)    }{ (-2 \alpha +2 \beta +4 \epsilon+2 n_1-1)^2 (-\alpha +\beta +\gamma +\delta +2 \epsilon+n_1+n_2+1)} \quad&\text{$n_1$ even, $n_2$ even,} \\[0.5em]     
                          \frac{ \frac{n_1}{2} (-2 \alpha +2 \beta +4 \epsilon+n_1-1) \left(-\alpha +\beta +2 \epsilon+n_1+\frac{n_2}{2}\right) \left(2 \alpha +2 \beta +\frac{ -2\alpha +2\gamma +2\epsilon+n_1+n_2+1}{-\alpha +\beta +\gamma +\delta +2 \epsilon+n_1+n_2+1}-1\right)             }{ (-2 \alpha +2 \beta +4 \epsilon+2 n_1-1)^2 (-\alpha +\beta +\gamma +\delta +2 \epsilon+n_1+n_2)} \quad&\text{$n_1$ even, $n_2$ odd,} \\[0.5em]  
			  \frac{\left(2 \epsilon+\frac{n_1-1}{2}\right) (2 \alpha -2 \beta -n_1) \left(-\alpha +\beta +\gamma +\delta +2 \epsilon+n_1+\frac{n_2}{2}\right) \left(2 \alpha +2 \beta +\frac{-2\alpha +2\gamma +2\epsilon+n_1+n_2+1}{-\alpha +\beta +\gamma +\delta +2 \epsilon+n_1+n_2+1}-1\right)    }{(-2 \alpha +2 \beta +4 \epsilon+2 n_1-1)^2 (-\alpha +\beta +\gamma +\delta +2 \epsilon+n_1+n_2)} \quad&\text{$n_1$ odd, $n_2$ even,} \\[0.5em]  
			   -\frac{\left(2 \epsilon+\frac{n_1-1}{2}\right) (2 \alpha -2 \beta -n_1) \left(-\alpha +\beta +2 \epsilon+n_1+\frac{n_2}{2}\right) \left(2 \alpha +2 \beta -\frac{-2\alpha +2\gamma +2\epsilon+n_1+n_2}{-\alpha +\beta +\gamma +\delta +2 \epsilon+n_1+n_2}+1\right)        }{(-2 \alpha +2 \beta +4 \epsilon+2 n_1-1)^2 (-\alpha +\beta +\gamma +\delta +2 \epsilon+n_1+n_2+1)} \quad&\text{$n_1$ odd, $n_2$ odd,}  
\end{cases}
\end{align*}
\begin{align*}
 \theta^{(9)}_{n_1,n_2} = 
\begin{cases}
                      &\!\!\!\!\!\frac{ \frac{n_1}{2}(-2 \alpha +2 \beta +4 \epsilon+n_1-1) (-2 \alpha +2 \beta +4 \epsilon+2 n_1+n_2-1) \left(-\alpha +\gamma +\epsilon+\frac{n_1+n_2}{2}\right) \left(\beta +\delta +\epsilon+\frac{n_1+n_2}{2}\right) \left(-\alpha +\beta +\gamma +\delta +2 \epsilon+n_1+\frac{n_2}{2}\right)   }{ (-1)^{n_2+1} (-2 \alpha +2 \beta +4 \epsilon+2 n_1-1)^2 (-\alpha +\beta +\gamma +\delta +2 \epsilon+n_1+n_2)^2        } \\ &\hfill \text{$n_1$ even, $n_2$ even,} \\[0.5em]     
                      &\!\!\!\!\!\frac{ \frac{n_1}{2}(-2 \alpha +2 \beta +4 \epsilon+n_1-1) (-2 \alpha +2 \beta +4 \epsilon+2 n_1+n_2) \left(-\alpha +\delta +\epsilon+\frac{n_1+n_2}{2}\right) \left(\beta +\gamma +\epsilon+\frac{n_1+n_2}{2}\right) \left(-\alpha +\beta +\gamma +\delta +2 \epsilon+n_1+\frac{n_2-1}{2}\right)   }{ (-1)^{n_2+1} (-2 \alpha +2 \beta +4 \epsilon+2 n_1-1)^2 (-\alpha +\beta +\gamma +\delta +2 \epsilon+n_1+n_2)^2        } \\ &\hfill \text{$n_1$ even, $n_2$ odd,} \\[0.5em]  
                      &\!\!\!\!\!\frac{\left(2 \epsilon+\frac{n_1-1}{2}\right) (-2 \alpha +2 \beta +n_1) (-2 \alpha +2 \beta +4 \epsilon+2 n_1+n_2-1) \left(-\alpha +\delta +\epsilon+\frac{n_1+n_2}{2}\right) \left(\beta +\gamma +\epsilon+\frac{n_1+n_2}{2}\right) \left(-\alpha +\beta +\gamma +\delta +2 \epsilon+n_1+\frac{n_2}{2}\right)}{(-1)^{n_2+1}(-2 \alpha +2 \beta +4 \epsilon+2 n_1-1)^2 (-\alpha +\beta +\gamma +\delta +2 \epsilon+n_1+n_2)^2} \\ &\hfill \text{$n_1$ odd, $n_2$ even,} \\[0.5em]  
		      &\!\!\!\!\!\frac{\left(2 \epsilon+\frac{n_1-1}{2}\right) (-2 \alpha +2 \beta +n_1) (-2 \alpha +2 \beta +4 \epsilon+2 n_1+n_2) \left(-\alpha +\gamma +\epsilon+\frac{n_1+n_2}{2}\right) \left(\beta +\delta +\epsilon+\frac{n_1+n_2}{2}\right) \left(-\alpha +\beta +\gamma +\delta +2 \epsilon+n_1+\frac{n_2-1}{2}\right)}{(-1)^{n_2+1}(-2 \alpha +2 \beta +4 \epsilon+2 n_1-1)^2 (-\alpha +\beta +\gamma +\delta +2 \epsilon+n_1+n_2)^2} \\ &\hfill \text{$n_1$ odd, $n_2$ odd.}  
\end{cases}
\end{align*}

\bibliographystyle{elsarticle-num}
\bibliography{bivariate_BI_JMP.bib}

\end{document}